\newcommand{\lyxdot}{.}
\providecommand{\algorithmname}{Algorithm}
\theoremstyle{plain}
\newtheorem{thm}{\protect\theoremname}
\theoremstyle{plain}
\newtheorem{lem}[thm]{\protect\lemmaname}
\newenvironment{proof}[1][\protect\proofname]{\par
	\normalfont\topsep6\p@\@plus6\p@\relax
	\trivlist
	\itemindent\parindent
	\item[\hskip\labelsep\scshape #1]\ignorespaces
}{%
	\endtrivlist\@endpefalse
}
\providecommand{\proofname}{Proof}
\theoremstyle{remark}
\newtheorem{rem}[thm]{\protect\remarkname}
\theoremstyle{plain}
\newtheorem{cor}[thm]{\protect\corollaryname}
\providecommand{\corollaryname}{Corollary}
\providecommand{\lemmaname}{Lemma}
\providecommand{\remarkname}{Remark}
\providecommand{\theoremname}{Theorem}
\begin{document}
\begin{frontmatter}

\title{ARPIST: Provably Accurate and Stable \\
Numerical Integration over Spherical Triangles}

\author[1,2]{Yipeng Li} \ead{jamesonli1313@gmail.com}
\author[1]{Xiangmin Jiao\corref{cor1}} \ead{xiangmin.jiao@stonybrook.edu}
\cortext[cor1]{Corresponding author} \address[1]{Dept. of Applied Math. \& Stat. and Institute for Advanced Computational Science, Stony Brook University, Stony Brook, NY 11794, USA.} \address[2]{Current address: Beijing Oneflow Technology Ltd., Haidian District, Beijing, 100083, China} 
\begin{abstract}
Numerical integration on spheres, including the computation of the
areas of spherical triangles, is a core computation in geomathematics.
The commonly used techniques sometimes suffer from instabilities and
significant loss of accuracy. We describe a new algorithm, called
\emph{ARPIST}, for accurate and stable integration of functions on
spherical triangles. ARPIST is based on an easy-to-implement transformation
to the spherical triangle from its corresponding linear triangle via
radial projection to achieve high accuracy and efficiency. More importantly,
ARPIST overcomes potential instabilities in computing the Jacobian
of the transformation, even for poorly shaped triangles that may occur
at poles in regular longitude-latitude meshes, by avoiding potential
catastrophic rounding errors. We compare our proposed technique with
L'Huilier's Theorem for computing the area of spherical triangles,
and also compare it with the recently developed LSQST method (J. Beckmann,
H.N. Mhaskar, and J. Prestin, \emph{GEM - Int. J. Geomath.}, 5:143--162,
2014) and a radial-basis-function-based technique (J. A. Reeger and
B. Fornberg, \emph{Stud. Appl. Math}., 137:174--188, 2015) for integration
of smooth functions on spherical triangulations. Our results show
that ARPIST enables superior accuracy and stability over previous
methods while being orders of magnitude faster and significantly easier
to implement.
\end{abstract}
\begin{keyword}
Surface integration\sep accuracy and stability\sep spherical triangles
\MSC[2010] 65D30 \sep 65G50
\end{keyword}
\end{frontmatter}

\section{Introduction}

Applications in geophysics often require solving partial differential
equations (PDEs) on spherical geometries using numerical methods,
such as finite element and finite volume methods. A critical component
is the computation of numerical integration over the elements or cells
of a surface mesh discretizing a sphere, for example, to compute surface
fluxes between the atmosphere and ocean models. Sometimes, it is desirable
to compute the integration accurately to near machine precision, for
example, when transferring flux quantities under the constraint of
global conservation. In these applications, one often needs to integrate
over a spherical $n$-gon. In this work, we focus on integrating functions
over spherical triangles, of which the edges are geodesics between
the vertices since a spherical $n$-gon can be tessellated into $n-2$
triangles.

Despite its importance, accurate and stable computation on spherical
triangles has not been resolved satisfactorily in the literature.
Classical numerical quadrature (a.k.a. cubature) techniques on spheres
have focused on integrating smooth functions over a whole sphere,
for example, by determining a minimal number of quadrature points
to maximize the exact integration of a maximal number of spherical
harmonics or polynomials; see, e.g., the classical papers \citep{mclaren1963optimal,lebedev1976quadratures},
recent monographs \citep{atkinson2012spherical,dai2013approximation},
the comprehensive survey article \citep{hesse2015numerical} and the
references therein, as well as more recent works \citep{reeger2016numerical,portelenelle2018efficient}.
Some of these techniques compute the integration using a tessellation
of the sphere, such as a spherical Delaunay triangulation \citep{reeger2016numerical}
or a cubed-sphere mesh \citep{portelenelle2018efficient}. One of
the earliest works that focused on integration over spherical triangles
is \citep{atkinson1982numerical}, which integrated over a spherical
triangle by recursively subdividing it and then using low-degree quadrature
rules. Such a technique, however, may require too many splittings
to reach (near) machine precision. Beckmann et al. \citep{beckmann2014local}
proposed the so-called LSQST, which uses QR factorization to compute
quadrature weights on each spherical triangle, resulting in superlinear
complexity in the number of quadrature points. In addition, LSQST
suffers from numerical instabilities \citep[Remark 2.3]{beckmann2014local}.
Recently, A. Sommariva et al. \citep{sommariva2021near,sommariva2021numerical}
used perpendicular projection  and quadrature rules on elliptical
sectors for spherical integration, which can reach machine precision
with a high degree quadrature rule (e.g., $n=20$) on a sphere octant.
Another recent work, SphericalQuadratureRBF (or SQRBF in short) \citep{reeger2015sqrbf,reeger2016numerical},
utilizes radial-basis functions in computing integration on a spherical
triangulation.

In this work, we propose a new technique called \emph{anchored radially
projected integration on spherical triangles} or \emph{ARPIST}. The
core idea of ARPIST is to utilize the transformation to the spherical
triangle from its corresponding linear triangle via a radial projection.
Although this idea is simple, care must be taken to ensure the stable
computation of the Jacobian determinant of the transformation. We
prove that ARPIST can overcome potential instabilities, even for poorly
shaped triangles, such as those near poles in a regular latitude-longitude
(RLL) mesh or the triangles in overlay meshes in remapping algorithms
\citep{ullrich2016arbitrary}. We overcome the instabilities by properly
selecting one of the vertices as the ``anchor'' when computing the
Jacobian determinant to avoid catastrophic cancellation errors. As
a result, ARPIST achieves superior accuracy and stability, reaching
(near) machine precision with an adaptive-refinement procedure. ARPIST
is also highly efficient: It requires only linear time complexity
in the number of quadrature points per triangle, which is significantly
more efficient than LSQST. In addition, ARPIST is much easier to implement
than LSQST \citep{beckmann2014local} and SQRBF \citep{reeger2016numerical}.
The MATLAB and Python implementations of ARPIST are available at \url{https://github.com/numgeom/arpist}.

The remainder of the paper is organized as follows. In Section~\ref{sec:Accurate-and-Stable},
we derive a new algorithm for numerical integration on spherical triangles
and analyze its accuracy and stability. In Section~\ref{sec:Numerical-Experiments},
we present some comparisons of ARPIST with other techniques in the
literature for the computation of the areas of spherical triangles
and the integration of smooth functions. Section~\ref{sec:Conclusions}
concludes the paper with some discussions.

\section{\label{sec:Accurate-and-Stable}Accurate and Stable Integration on
Spheres}

We describe a new algorithm to compute the numerical integration of
a sufficiently smooth function over a spherical triangle. 

\subsection{Integration via radial projection}

Consider a spherical triangle $S$ with vertices $\boldsymbol{x}_{1}$,
$\boldsymbol{x}_{2}$, and $\boldsymbol{x}_{3}$. Without loss of
generality, assume the vertices are in counterclockwise order with
respect to the outward normal to the sphere. Let $r$ denote the radius
of the sphere, i.e., $r=\Vert\boldsymbol{x}_{i}\Vert$. Let $T$ denote
its corresponding flat (linear) triangle $\boldsymbol{x}_{1}\boldsymbol{x}_{2}\boldsymbol{x}_{3}$,
and let $(\xi,\eta)$ denote the natural coordinates of $T$, so that
$T$ has the parameterization
\begin{equation}
\boldsymbol{x}(\xi,\eta)=(1-\xi-\eta)\boldsymbol{x}_{1}+\xi\boldsymbol{x}_{2}+\eta\boldsymbol{x}_{3}\label{eq:r(xi,eta)}
\end{equation}
for $0\leq\xi\leq1$ and $0\leq\eta\leq1-\xi$. Let $\hat{\boldsymbol{p}}(\boldsymbol{x})=r\boldsymbol{x}/\left\Vert \boldsymbol{x}\right\Vert $,
which projects a point $\boldsymbol{x}\in T$ onto a point in $S$.
We then obtain a radial projection 
\begin{equation}
\boldsymbol{p}(\xi,\eta)=\hat{\boldsymbol{p}}(\boldsymbol{x}(\xi,\eta))=r\boldsymbol{x}(\xi,\eta)/\left\Vert \boldsymbol{x}(\xi,\eta)\right\Vert .\label{eq:points_sphere}
\end{equation}
See Figure~\ref{fig:Projection-from} for a schematic of the mapping.
\begin{figure}
\begin{centering}
\includegraphics[width=0.98\columnwidth]{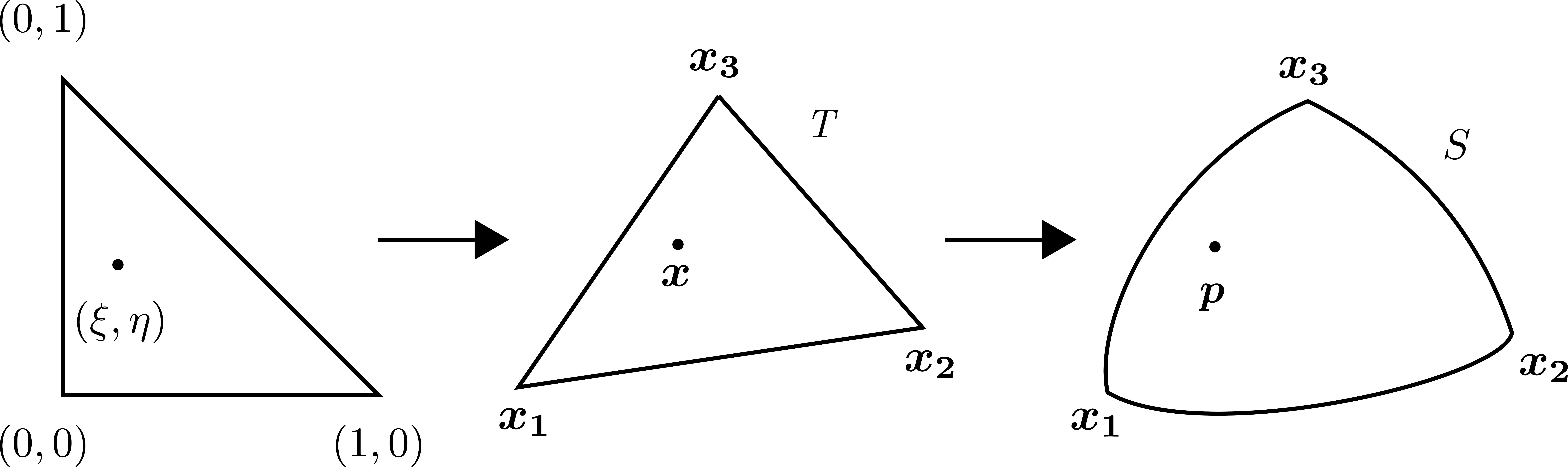}
\par\end{centering}
\caption{\label{fig:Projection-from}Projection from a reference triangle to
a spherical triangle via a linear triangle.}
\end{figure}

Given a function $f$ on the spherical triangle $S$, the integral
is
\begin{equation}
\int_{S}f(\boldsymbol{p})dA=\int_{0}^{1}\int_{0}^{1-\xi}f(\boldsymbol{p}(\xi,\eta))J\,\text{d}\eta\,\text{d}\xi,\label{eq:integral-transformed}
\end{equation}
where $J=\left\Vert \boldsymbol{p}_{\xi}\times\boldsymbol{p}_{\eta}\right\Vert $
is the Jacobian determinant of the mapping from the reference triangle
to the curved triangle. The partial derivatives $\boldsymbol{p}_{\xi}$
and $\boldsymbol{p}_{\eta}$ have the closed forms
\begin{align}
\boldsymbol{p}_{\xi} & =\frac{r}{\left\Vert \boldsymbol{x}\right\Vert }\left(\boldsymbol{x}_{\xi}-\frac{\boldsymbol{x}\cdot\boldsymbol{x}_{\xi}}{\boldsymbol{x}\cdot\boldsymbol{x}}\boldsymbol{x}\right),\label{eq:dp_dxi}\\
\boldsymbol{p}_{\eta} & =\frac{r}{\left\Vert \boldsymbol{x}\right\Vert }\left(\boldsymbol{x}_{\eta}-\frac{\boldsymbol{x}\cdot\boldsymbol{x}_{\eta}}{\boldsymbol{x}\cdot\boldsymbol{x}}\boldsymbol{x}\right).\label{eq:dp_deta}
\end{align}
Note that $J$ varies from point to point, so the computation based
on (\ref{eq:dp_dxi}) and (\ref{eq:dp_deta}) directly are inefficient
to compute. More importantly, the cross product may suffer from instabilities
due to cancellation errors when $\boldsymbol{p}_{\xi}$ and $\boldsymbol{p}_{\eta}$
are nearly parallel.

To achieve stability and efficiency, we derive a new formula as follows.
For brevity, let us use the notation
\[
\det\left[\boldsymbol{a},\boldsymbol{b},\boldsymbol{c}\right]=\left|\begin{array}{ccc}
a_{1} & b_{1} & c_{1}\\
a_{2} & b_{2} & c_{2}\\
a_{3} & b_{3} & c_{3}
\end{array}\right|=\left|\begin{array}{ccc}
a_{1} & a_{2} & a_{3}\\
b_{1} & b_{2} & b_{3}\\
c_{1} & c_{2} & c_{3}
\end{array}\right|,
\]
which is equivalent to the triple product $\boldsymbol{a}\cdot(\boldsymbol{b}\times\boldsymbol{c})$.
The following property will turn out to be useful:
\begin{equation}
\det\left[\boldsymbol{a},\boldsymbol{b},\boldsymbol{c}\right]=\det\left[\boldsymbol{a},\boldsymbol{b}+s\boldsymbol{a},\boldsymbol{c}+t\boldsymbol{a}\right]\qquad\forall s,t\in\mathbb{R}.\label{eq:mix product property}
\end{equation}

\begin{lem}
\label{lem:radial-projection}Given a spherical triangle $S$ on a
sphere with radius $r$, let $\boldsymbol{x}_{1}$, $\boldsymbol{x}_{2}$,
and $\boldsymbol{x}_{3}$ be its vertices in counterclockwise order
w.r.t. the outward normal to $S$. Let $T$ be the corresponding linear
triangle $\boldsymbol{x}_{1}\boldsymbol{x}_{2}\boldsymbol{x}_{3}$
composed of points $\boldsymbol{x}(\xi,\eta)$ as in (\ref{eq:r(xi,eta)}).
The integral of a continuous function $f$ over $S$ is 
\begin{equation}
\int_{S}f(\boldsymbol{p})\,\text{d}A=r^{2}\det\left[\boldsymbol{x}_{1},\boldsymbol{x}_{2},\boldsymbol{x}_{3}\right]\int_{0}^{1}\int_{0}^{1-\xi}\frac{f(\boldsymbol{p}(\xi,\eta))}{\left\Vert \boldsymbol{x}(\xi,\eta)\right\Vert ^{3}}\,\text{d}\eta\,\text{d}\xi,\label{eq:radial-projection}
\end{equation}
where $\boldsymbol{p}(\xi,\eta)$ is defined in (\ref{eq:points_sphere}).
\end{lem}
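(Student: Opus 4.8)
The plan is to reduce (\ref{eq:integral-transformed}) to (\ref{eq:radial-projection}) by finding a closed form for the Jacobian determinant $J=\Vert\boldsymbol{p}_{\xi}\times\boldsymbol{p}_{\eta}\Vert$. The key observation is that $\boldsymbol{p}(\xi,\eta)$ lies on the sphere of radius $r$, so $\boldsymbol{p}\cdot\boldsymbol{p}=r^{2}$ is constant; differentiating in $\xi$ and $\eta$ gives $\boldsymbol{p}\cdot\boldsymbol{p}_{\xi}=\boldsymbol{p}\cdot\boldsymbol{p}_{\eta}=0$, so $\boldsymbol{p}_{\xi}\times\boldsymbol{p}_{\eta}$ is parallel to $\boldsymbol{p}$, and hence parallel to $\boldsymbol{x}(\xi,\eta)$. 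Therefore $J=\Vert\boldsymbol{p}_{\xi}\times\boldsymbol{p}_{\eta}\Vert=|\boldsymbol{x}\cdot(\boldsymbol{p}_{\xi}\times\boldsymbol{p}_{\eta})|/\Vert\boldsymbol{x}\Vert=|\det[\boldsymbol{x},\boldsymbol{p}_{\xi},\boldsymbol{p}_{\eta}]|/\Vert\boldsymbol{x}\Vert$; the purpose of this step is to turn the norm of a cross product into a triple product, a form that behaves well under (\ref{eq:mix product property}).

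Next I would substitute the closed forms (\ref{eq:dp_dxi}) and (\ref{eq:dp_deta}). Pulling the common scalar $r/\Vert\boldsymbol{x}\Vert$ out of each of $\boldsymbol{p}_{\xi}$ and $\boldsymbol{p}_{\eta}$, the vectors remaining in the determinant are $\boldsymbol{x}_{\xi}$ and $\boldsymbol{x}_{\eta}$, each minus a scalar multiple of $\boldsymbol{x}$; since those correction terms are multiples of the first argument $\boldsymbol{x}$, property (\ref{eq:mix product property}) discards them, yielding $\det[\boldsymbol{x},\boldsymbol{p}_{\xi},\boldsymbol{p}_{\eta}]=(r^{2}/\Vert\boldsymbol{x}\Vert^{2})\det[\boldsymbol{x},\boldsymbol{x}_{\xi},\boldsymbol{x}_{\eta}]$. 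From (\ref{eq:r(xi,eta)}) we have $\boldsymbol{x}_{\xi}=\boldsymbol{x}_{2}-\boldsymbol{x}_{1}$, $\boldsymbol{x}_{\eta}=\boldsymbol{x}_{3}-\boldsymbol{x}_{1}$, and $\boldsymbol{x}=\boldsymbol{x}_{1}+\xi\boldsymbol{x}_{\xi}+\eta\boldsymbol{x}_{\eta}$. Expanding the determinant in its first column by multilinearity, the two terms with a repeated column vanish, leaving $\det[\boldsymbol{x}_{1},\boldsymbol{x}_{2}-\boldsymbol{x}_{1},\boldsymbol{x}_{3}-\boldsymbol{x}_{1}]$, to which (\ref{eq:mix product property}) applies directly with anchor $\boldsymbol{x}_{1}$ to give $\det[\boldsymbol{x}_{1},\boldsymbol{x}_{2},\boldsymbol{x}_{3}]$. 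Collecting all factors yields $J=r^{2}|\det[\boldsymbol{x}_{1},\boldsymbol{x}_{2},\boldsymbol{x}_{3}]|/\Vert\boldsymbol{x}(\xi,\eta)\Vert^{3}$; the absolute value is dropped because the counterclockwise ordering of the vertices w.r.t.\ the outward normal makes $\det[\boldsymbol{x}_{1},\boldsymbol{x}_{2},\boldsymbol{x}_{3}]>0$. Substituting into (\ref{eq:integral-transformed}) and pulling the constant $r^{2}\det[\boldsymbol{x}_{1},\boldsymbol{x}_{2},\boldsymbol{x}_{3}]$ outside the integral gives (\ref{eq:radial-projection}).

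The step requiring the most care is the first one, namely recognizing that $\boldsymbol{p}_{\xi}\times\boldsymbol{p}_{\eta}$ is radial: this is what collapses a vector identity into a scalar determinant identity and makes (\ref{eq:mix product property}) usable; without it, one would be stuck estimating the norm of a cross product of two vectors, each built from a subtraction, which is precisely the cancellation-prone computation the lemma is designed to circumvent. A minor technical point to record along the way is that $\boldsymbol{x}(\xi,\eta)\neq\boldsymbol{0}$ throughout the reference triangle, since for a proper spherical triangle the plane of $T$ does not pass through the center of the sphere; this ensures that $\boldsymbol{p}$ and all the divisions above are well defined and that $\boldsymbol{p}$ is a regular parameterization, so that $J$ is indeed the surface area element appearing in (\ref{eq:integral-transformed}).
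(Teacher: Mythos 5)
Your proof is correct and follows essentially the same route as the paper: identify that $\boldsymbol{p}_{\xi}\times\boldsymbol{p}_{\eta}$ is radial, convert $J$ to the triple product $\det[\boldsymbol{x}/\Vert\boldsymbol{x}\Vert,\boldsymbol{p}_{\xi},\boldsymbol{p}_{\eta}]$, and use (\ref{eq:mix product property}) together with multilinearity to reduce it to $r^{2}\det[\boldsymbol{x}_{1},\boldsymbol{x}_{2},\boldsymbol{x}_{3}]/\Vert\boldsymbol{x}\Vert^{3}$. Your explicit justification of radiality via differentiating $\boldsymbol{p}\cdot\boldsymbol{p}=r^{2}$, of the sign via the orientation convention, and of $\boldsymbol{x}(\xi,\eta)\neq\boldsymbol{0}$ are small additions that only make the argument more complete.
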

\begin{proof}
Consider $\boldsymbol{p}_{\xi}$ and $\boldsymbol{p}_{\eta}$ in (\ref{eq:dp_dxi})
and (\ref{eq:dp_deta}). Note that $\boldsymbol{p}_{\xi}$ and $\boldsymbol{p}_{\eta}$
are tangent to $S$, so $\boldsymbol{p}_{\xi}\times\boldsymbol{p}_{\eta}$
is normal to $S$ and hence is parallel to $\boldsymbol{x}/\Vert\boldsymbol{x}\Vert$.
The area measure $J$ in (\ref{eq:integral-transformed}) is then
\begin{align}
J=\left\Vert \boldsymbol{p}_{\xi}\times\boldsymbol{p}_{\xi}\right\Vert  & =\text{det\ensuremath{\left[\frac{\boldsymbol{x}}{\left\Vert \boldsymbol{x}\right\Vert },\boldsymbol{p}_{\xi},\boldsymbol{p}_{\eta}\right]}}\label{eq:area_measure}\\
 & =\frac{r^{2}}{\left\Vert \boldsymbol{x}\right\Vert ^{3}}\det\left[\boldsymbol{x},\boldsymbol{x}_{\xi}-\frac{\boldsymbol{x}\cdot\boldsymbol{x}_{\xi}}{\boldsymbol{x}\cdot\boldsymbol{x}}\boldsymbol{x},\boldsymbol{x}_{\eta}-\frac{\boldsymbol{x}\cdot\boldsymbol{x}_{\eta}}{\boldsymbol{x}\cdot\boldsymbol{x}}\boldsymbol{x}\right],\\
 & =\frac{r^{2}}{\left\Vert \boldsymbol{x}\right\Vert ^{3}}\text{det\ensuremath{\left[\boldsymbol{x},\boldsymbol{x}_{\xi},\boldsymbol{x}_{\eta}\right]}},\label{eq:area_measure_2}
\end{align}
where the last equality is due to (\ref{eq:mix product property}).
From (\ref{eq:r(xi,eta)}), we have 
\begin{equation}
\boldsymbol{x}_{\xi}=\boldsymbol{x}_{2}-\boldsymbol{x}_{1}\qquad\text{and}\qquad\boldsymbol{x}_{\eta}=\boldsymbol{x}_{3}-\boldsymbol{x}_{1}.\label{eq:dr_dxi}
\end{equation}
Substituting (\ref{eq:dr_dxi}) and (\ref{eq:r(xi,eta)}) into (\ref{eq:area_measure_2})
and using (\ref{eq:mix product property}), we have
\begin{align}
J & =\frac{r^{2}}{\left\Vert \boldsymbol{x}\right\Vert ^{3}}\text{det\ensuremath{\left[\boldsymbol{x},\boldsymbol{x}_{2}-\boldsymbol{x}_{1},\boldsymbol{x}_{3}-\boldsymbol{x}_{1}\right]}}\label{eq:area_measure_3}\\
 & =\frac{r^{2}}{\left\Vert \boldsymbol{x}\right\Vert ^{3}}\text{det\ensuremath{\left[\boldsymbol{x}_{1}+\xi(\boldsymbol{x}_{2}-\boldsymbol{x}_{1})+\eta(\boldsymbol{x}_{3}-\boldsymbol{x}_{1}),\boldsymbol{x}_{2}-\boldsymbol{x}_{1},\boldsymbol{x}_{3}-\boldsymbol{x}_{1}\right]}}\\
 & =\frac{r^{2}}{\left\Vert \boldsymbol{x}\right\Vert ^{3}}\text{det\ensuremath{\left[\boldsymbol{x}_{1},\boldsymbol{x}_{2}-\boldsymbol{x}_{1},\boldsymbol{x}_{3}-\boldsymbol{x}_{1}\right]}}\label{eq:area_measure_4}\\
 & =\frac{r^{2}}{\left\Vert \boldsymbol{x}\right\Vert ^{3}}\text{det\ensuremath{\left[\boldsymbol{x}_{1},\boldsymbol{x}_{2},\boldsymbol{x}_{3}\right]}}.\label{eq:area_measure_final}
\end{align}
Substituting (\ref{eq:area_measure_final}) into (\ref{eq:integral-transformed}),
we then obtain (\ref{eq:radial-projection}).
\end{proof}
Note that in (\ref{eq:area_measure}), the triple product $\det\left[\boldsymbol{x}/\left\Vert \boldsymbol{x}\right\Vert ,\boldsymbol{p}_{\xi},\boldsymbol{p}_{\eta}\right]$
is guaranteed to be positive due to the counterclockwise convention
of $\boldsymbol{x}_{1}\boldsymbol{x}_{2}\boldsymbol{x}_{3}$, since
both $\boldsymbol{x}$ and $\boldsymbol{p}_{\xi}\times\boldsymbol{p}_{\eta}$
points outward to $S$. 
\begin{rem}
Eq.~(\ref{eq:radial-projection}) has some vague similarities as
(2) in \citep{reeger2016numerical}, but there are two fundamental
differences. First, Reeger and Fornberg used the gnomonic projection
\citep{snyder1987map} to project spherical triangles onto a local
tangent plane after rotating the spherical triangle to the top. In
contrast, our approach projects the spherical triangles onto the linear
triangle radially without requiring any rotation. Second, Reeger and
Fornberg expressed the integration in the global $xyz$ coordinate
system and then approximated it using radial-basis functions. In contrast,
our approach expresses the integration using a local parameterization
over the triangle and then integrates the function using standard
Gaussian quadrature rules over the linear triangles.
\end{rem}

\subsection{Radially projected Gaussian quadrature}

Utilizing Lemma~\ref{lem:radial-projection}, we then obtain an efficient
quadrature rule based on the Gaussian quadrature over the linear triangle
$T$.
\begin{thm}
\label{thm:rp-quadrature}Given a spherical triangle $S$ on a sphere
with radius $r$ and  vertices and its corresponding linear triangle
$T$ as in Lemma~\ref{lem:radial-projection}, let $\{(\boldsymbol{\xi}_{i},w_{i})\mid1\leq i\leq q\}$
define a degree-$p$ quadrature over $T$, where the $\boldsymbol{\xi}_{i}$
are the quadrature points and the $w_{i}$ are the corresponding weights.
If the integrand $f(\boldsymbol{p}):S\rightarrow\mathbb{R}$ is continuously
differentiable to $p$th order, then
\begin{equation}
\int_{S}f(\boldsymbol{p}(\boldsymbol{\xi}))dA=r^{2}\det\left[\boldsymbol{x}_{1},\boldsymbol{x}_{2},\boldsymbol{x}_{3}\right]\sum_{i}\frac{w_{i}}{\left\Vert \boldsymbol{x}(\boldsymbol{\xi}_{i})\right\Vert ^{3}}f(\boldsymbol{p}(\boldsymbol{\xi}_{i}))+A\mathcal{O}(h^{p+1}),\label{eq:rp_quadrature}
\end{equation}
where $h$ denotes the longest edge of the triangle and $A=\text{area}(T)$.
\end{thm}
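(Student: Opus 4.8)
The plan is to reduce the estimate to a textbook quadrature-error bound for the transformed integrand on the \emph{fixed} reference triangle, and then to control how the geometric prefactor scales with the area. First, apply Lemma~\ref{lem:radial-projection}: setting $\hat{T}=\{(\xi,\eta):0\le\xi\le1,\ 0\le\eta\le1-\xi\}$ and
\[
g(\xi,\eta):=\frac{f(\boldsymbol{p}(\xi,\eta))}{\left\Vert \boldsymbol{x}(\xi,\eta)\right\Vert ^{3}},
\]
equation (\ref{eq:radial-projection}) becomes $\int_{S}f\,\mathrm{d}A=r^{2}\det[\boldsymbol{x}_{1},\boldsymbol{x}_{2},\boldsymbol{x}_{3}]\int_{\hat{T}}g$. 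In natural coordinates the degree-$p$ rule satisfies $\sum_{i}w_{i}q(\boldsymbol{\xi}_{i})=\int_{\hat{T}}q$ for every polynomial $q$ of total degree at most $p$ (so in particular $\sum_{i}w_{i}=\tfrac{1}{2}$), hence the finite sum in (\ref{eq:rp_quadrature}) is exactly $r^{2}\det[\boldsymbol{x}_{1},\boldsymbol{x}_{2},\boldsymbol{x}_{3}]$ times the quadrature approximation $\sum_{i}w_{i}g(\boldsymbol{\xi}_{i})$ of $\int_{\hat{T}}g$. It therefore suffices to prove (a) $\bigl|\int_{\hat{T}}g-\sum_{i}w_{i}g(\boldsymbol{\xi}_{i})\bigr|=\mathcal{O}(h^{p+1})$ and (b) $r^{2}\det[\boldsymbol{x}_{1},\boldsymbol{x}_{2},\boldsymbol{x}_{3}]=\mathcal{O}(A)$.

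For (a), subtract from $g$ its degree-$p$ Taylor polynomial $P_{p}$ about any point of $\hat{T}$. The rule integrates $P_{p}$ exactly, so the quadrature error equals $\int_{\hat{T}}(g-P_{p})-\sum_{i}w_{i}(g-P_{p})(\boldsymbol{\xi}_{i})$, which is at most $\bigl(\tfrac{1}{2}+\sum_{i}|w_{i}|\bigr)\max_{\hat{T}}|g-P_{p}|\le C_{q}\max_{|\alpha|=p+1}\bigl\Vert \partial^{\alpha}g\bigr\Vert _{L^{\infty}(\hat{T})}$, with $C_{q}$ depending only on the fixed reference rule. The remaining point is the $h$-dependence of these derivatives. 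By (\ref{eq:dr_dxi}), $(\xi,\eta)\mapsto\boldsymbol{x}(\xi,\eta)$ is affine with constant first derivatives $\boldsymbol{x}_{\xi}=\boldsymbol{x}_{2}-\boldsymbol{x}_{1}$ and $\boldsymbol{x}_{\eta}=\boldsymbol{x}_{3}-\boldsymbol{x}_{1}$, each of norm at most $h$, while $g=\tilde{f}\circ\boldsymbol{x}$ with $\tilde{f}(\boldsymbol{x}):=f\bigl(r\boldsymbol{x}/\left\Vert \boldsymbol{x}\right\Vert \bigr)/\left\Vert \boldsymbol{x}\right\Vert ^{3}$ being $C^{p+1}$ on a fixed neighborhood of $T$ (this uses that $T$ stays uniformly bounded away from the origin, so the radial normalization and the factor $\left\Vert \boldsymbol{x}\right\Vert ^{-3}$ have controlled derivatives there, and that $f$ is smooth on $S\supset\boldsymbol{p}(\hat{T})$). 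Because $\boldsymbol{x}(\cdot)$ is affine, the chain rule expresses each $\partial^{\alpha}g$ with $|\alpha|=p+1$ as a sum of contractions of a single $(p+1)$-st partial derivative of $\tilde{f}$ against $p+1$ copies of the constant vectors $\boldsymbol{x}_{\xi},\boldsymbol{x}_{\eta}$; hence $\left\Vert \partial^{\alpha}g\right\Vert _{L^{\infty}(\hat{T})}\le C\,h^{p+1}$, with $C$ absorbing $r$ and the $(p+1)$-st derivative bounds of $f$ on $S$. This gives (a).

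For (b), property (\ref{eq:mix product property}) gives $\det[\boldsymbol{x}_{1},\boldsymbol{x}_{2},\boldsymbol{x}_{3}]=\det[\boldsymbol{x}_{1},\boldsymbol{x}_{\xi},\boldsymbol{x}_{\eta}]=\boldsymbol{x}_{1}\cdot(\boldsymbol{x}_{\xi}\times\boldsymbol{x}_{\eta})$, and since $\left\Vert \boldsymbol{x}_{\xi}\times\boldsymbol{x}_{\eta}\right\Vert =2A$ and $\left\Vert \boldsymbol{x}_{1}\right\Vert =r$, Cauchy--Schwarz yields $0<r^{2}\det[\boldsymbol{x}_{1},\boldsymbol{x}_{2},\boldsymbol{x}_{3}]\le2r^{3}A$ (positivity being the counterclockwise convention noted after Lemma~\ref{lem:radial-projection}). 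Multiplying the bound from (a) by this prefactor gives the $A\,\mathcal{O}(h^{p+1})$ term, and adding it to the exactly represented sum yields (\ref{eq:rp_quadrature}).

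I expect the crux to be the derivative-scaling estimate in (a): one must verify \emph{uniformly} over the triangle that $g$ picks up a full factor of $h$ with each differentiation, which requires a quantitative lower bound on $\min_{\boldsymbol{x}\in T}\left\Vert \boldsymbol{x}\right\Vert $ (so that $\tilde{f}$ and its derivatives through order $p+1$ are bounded on one common neighborhood of $T$) together with a bound on the $C^{p+1}$-norm of $f$ along $\boldsymbol{p}(\hat{T})\subset S$. This is also where the regularity hypothesis is effectively used as ``$p+1$ times continuously differentiable'' (equivalently, $f\in C^{p}$ with Lipschitz $p$-th derivatives): with only $C^{p}$ the Taylor remainder is merely $o(h^{p})$, and the extra derivative is precisely what sharpens it to the stated $\mathcal{O}(h^{p+1})$.
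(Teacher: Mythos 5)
Your proof is correct and follows essentially the same route as the paper: apply Lemma~\ref{lem:radial-projection}, Taylor-expand the transformed integrand $g=f(\boldsymbol{p})/\Vert\boldsymbol{x}\Vert^{3}$ on the reference triangle, use exactness of the degree-$p$ rule on the Taylor polynomial, and bound the remainder via the chain rule together with $\Vert\boldsymbol{x}\Vert\geq C_{2}>0$. You additionally make explicit two steps the paper only sketches — that each differentiation of $g$ contributes a factor $h$ because $\boldsymbol{x}(\xi,\eta)$ is affine with $\Vert\boldsymbol{x}_{\xi}\Vert,\Vert\boldsymbol{x}_{\eta}\Vert\leq h$, and that $r^{2}\det[\boldsymbol{x}_{1},\boldsymbol{x}_{2},\boldsymbol{x}_{3}]\leq2r^{3}A$ — and you correctly observe that the argument really uses $(p+1)$-fold differentiability of $f$.
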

This theorem follows from the two-dimensional Taylor series expansion
\citep{humpherys2017foundations} and the high-order chain rule \citep{ma2009higher},
analogous to the proof of one-dimensional quadrature rules. We sketch
the argument as follows. 
\begin{proof}
Let $g(\boldsymbol{\xi})$ denote $f(\boldsymbol{p}(\boldsymbol{\xi}))/\left\Vert \boldsymbol{x}(\boldsymbol{\xi})\right\Vert ^{3}$.
Consider the $d$-dimensional Taylor series expansion of $g(\boldsymbol{\xi})$
with respect to $\boldsymbol{\xi}$ about $\boldsymbol{\xi}_{0}$
(for example, $\boldsymbol{\xi}_{0}=\boldsymbol{0}$),
\begin{align}
g(\boldsymbol{\xi}_{0}+\boldsymbol{h}) & =\sum_{k=0}^{p}\frac{1}{k!}\boldsymbol{\nabla}^{k}g(\boldsymbol{\xi}_{0}):\boldsymbol{h}^{k}+\frac{C_{1}\left\Vert \boldsymbol{\nabla}^{p+1}g(\boldsymbol{\xi}_{0}+\boldsymbol{\epsilon})\right\Vert }{(p+1)!}\Vert\boldsymbol{h}\Vert^{p+1},\label{eq:Taylor-polynomial-residual}
\end{align}
where $\boldsymbol{h}^{k}$ denotes the $k$th tensor power of $\boldsymbol{h}$,
$\boldsymbol{\nabla}^{k}$ denotes the derivative tensor of order
$k$ with respect to $\boldsymbol{\xi}$, ``:'' denotes the scalar
product of $k$th-order tensors, $\Vert\boldsymbol{\epsilon}\Vert\leq\Vert\boldsymbol{h}\Vert$,
and $\vert C_{1}\vert\leq1$. A degree-$p$ quadrature rule over the
triangle $T$ integrates the first term in (\ref{eq:Taylor-polynomial-residual})
exactly. Since $\boldsymbol{x}(\boldsymbol{\xi})$ and $\boldsymbol{p}(\boldsymbol{\xi})$
are both smooth and $\left\Vert \boldsymbol{x}(\boldsymbol{\xi})\right\Vert =r+\mathcal{O}(h^{2})\geq C_{2}>0$,
by repeatedly applying the high-order chain rule, we conclude that
$\left\Vert \boldsymbol{\nabla}^{p+1}g(\boldsymbol{\xi}_{0}+\boldsymbol{\epsilon})\right\Vert $
is bounded if $f(\boldsymbol{p})$ is continuously differentiable.
Hence, the remainder term in (\ref{eq:Taylor-polynomial-residual})
is $\mathcal{O}(h^{p+1})$, and the integral error over $T$ is bounded
by $\text{area}(T)\mathcal{O}(h^{p+1})$.
\end{proof}
Theorem~\ref{thm:rp-quadrature} allows us to reuse Gaussian quadrature
rules on linear triangles, such as those in \citep{cools2003encyclopedia},
to integrate over spherical triangles directly. It is well known that
Gaussian quadrature rules can achieve the highest accuracy for a given
number of quadrature points over individual triangles, because they
are constructed by solving for the quadrature points and the weights
to maximize the degree of polynomials that can be integrated exactly
(see e.g., \citep[Chapter 8]{heath2018scientific}). In (\ref{eq:rp_quadrature}),
$\text{area}(T)=\mathcal{O}(h^{2})$ for a triangle $T$ with maximum
edge length $h$. When applying the radially projected quadrature
rule on a triangulation of a fixed area on a sphere, the resulting
composite quadrature rule is then $(p+1)$st order accurate if the
integrand is continuously differentiable to $p$th order. 

\subsection{\label{subsec:Stable-determinant}Stable computation of the determinant}

When evaluating (\ref{eq:rp_quadrature}), a subtle numerical issue
is the computation of $\text{det\ensuremath{\left[\boldsymbol{x}_{1},\boldsymbol{x}_{2},\boldsymbol{x}_{3}\right]}}$.
There are various ways in computing the determinant, and they may
have drastically different stability properties when using floating-point
arithmetic for small spherical triangles. For example, one could compute
the determinants using the triple-product formula
\begin{align}
\boldsymbol{a}\cdot(\boldsymbol{b}\times\boldsymbol{c})= & a_{1}(b_{2}c_{3}-b_{3}c_{2})+a_{2}(b_{3}c_{1}-b_{1}c_{3})+a_{3}(b_{1}c_{2}-b_{2}c_{1}),\label{eq:triple-product}
\end{align}
by substituting $\boldsymbol{x}_{1}$, $\boldsymbol{x}_{2}$, and
$\boldsymbol{x}_{3}$ as $\boldsymbol{a}$, $\boldsymbol{b}$, and
$\boldsymbol{c}$, respectively. This approach, however, is unstable
when $\boldsymbol{x}_{1}$, $\boldsymbol{x}_{2},$ and $\boldsymbol{x}_{3}$
are nearly parallel to each other (c.f. Remark~\ref{rem:instability-triple-product}
in \ref{sec:Anchored-triple-product}), which unfortunately is the
case for nearly every triangle on a finer triangulation. Another standard
approach is to use LU factorization (a.k.a. Gaussian elimination)
with partial pivoting (LUPP) \citep[p. 114]{Golub13MC}. In particular,
given $\boldsymbol{A}=\left[\boldsymbol{x}_{1},\boldsymbol{x}_{2},\boldsymbol{x}_{3}\right]$,
one can compute its LUPP 
\begin{equation}
\boldsymbol{P}\boldsymbol{A}=\boldsymbol{L}\boldsymbol{U},\label{eq:lupp}
\end{equation}
where $\boldsymbol{P}$ is a permutation matrix corresponding to the
row interchanges of $\boldsymbol{A}$. Since $\det(\boldsymbol{P}^{T})=\det(\boldsymbol{P})=\pm1$
and $\det(\boldsymbol{L})=1$, 
\begin{equation}
\det(\boldsymbol{A})=\pm\det(\boldsymbol{U})=\pm\prod_{i=1}^{3}u_{ii},\label{eq:det_lupp}
\end{equation}
where the sign is the same as $\text{sign}(\det(\boldsymbol{P}))$
and the $u_{ii}$ are the diagonal entries of $\boldsymbol{U}$. This
technique is more stable than the triple product, but it is still
inaccurate when $\boldsymbol{x}_{1}$, $\boldsymbol{x}_{2},$ and
$\boldsymbol{x}_{3}$ are nearly parallel to each other due to the
large condition number of $\boldsymbol{A}$. A more sophisticated
technique was proposed by Clarkson \citep{clarkson1992safe}, who
adapted modified Gram-Schmidt and used an adaptive procedure to bound
relative errors. However, Clarkson's algorithm applies only to integer
matrices. To the best of our knowledge, there was no existing method
with guaranteed accuracy of the computed determinant with floating-point
arithmetic, even for $3\times3$ matrices.

To achieve accuracy and stability, we notice that
\begin{equation}
\det\left[\boldsymbol{x}_{1},\boldsymbol{x}_{2},\boldsymbol{x}_{3}\right]=\det\left[\boldsymbol{x}_{k},\boldsymbol{x}_{k\oplus1}-\boldsymbol{x}_{k},\boldsymbol{x}_{k\oplus2}-\boldsymbol{x}_{k}\right],\label{eq:det_anchored}
\end{equation}
where $\boldsymbol{x}_{k}$ is chosen to be the vertex incident on
the two shorter edges of the triangle $\boldsymbol{x}_{1}\boldsymbol{x}_{2}\boldsymbol{x}_{3}$,
and $k\oplus i$ denotes $\mod\left(k+(i-1),3\right)+1$. We refer
to $\boldsymbol{x}_{k}$ as the \emph{anchor}. Note that when $k=1$,
(\ref{eq:det_anchored}) coincides with the triple-product in (\ref{eq:area_measure_4}).
When $k\neq1$, shifting the indices as in (\ref{eq:det_anchored})
preserves the determinant. To evaluate the right-hand side of (\ref{eq:det_anchored}),
we propose to use (\ref{eq:triple-product}), which we refer to as
\emph{anchored triple product} (or \emph{ATP}), which turns out to
be stable in practice. A more sophisticated strategy is to compute
(\ref{eq:det_anchored}) using (\ref{eq:lupp}--\ref{eq:det_lupp})
with column equilibration \citep[p. 139]{Golub13MC}. In particular,
let $\boldsymbol{A}=\left[\boldsymbol{x}_{k},\boldsymbol{x}_{k\oplus1}-\boldsymbol{x}_{k},\boldsymbol{x}_{k\oplus2}-\boldsymbol{x}_{k}\right]\boldsymbol{D}^{-1}$
with $\boldsymbol{D}=\text{diag}\left\{ \Vert\boldsymbol{x}_{k}\Vert,\Vert\boldsymbol{x}_{k\oplus1}-\boldsymbol{x}_{k}\Vert,\Vert\boldsymbol{x}_{k\oplus2}-\boldsymbol{x}_{k}\Vert\right\} $,
and then 
\begin{equation}
\det\left[\boldsymbol{x}_{1},\boldsymbol{x}_{2},\boldsymbol{x}_{3}\right]=\left|\prod_{i=1}^{3}d_{i}u_{ii}\right|.\label{eq:det_alupp_eq}
\end{equation}
We refer to the latter approach as \emph{anchored LUPP with equilibration}
(or \emph{ALUPPE}), and it is even more stable than ATP for some pathological
cases.

To justify ATP and ALUPPE, let us first derive error bounds for them
for an arbitrary $\boldsymbol{A}\in\mathbb{R}^{3\times3}$. Let $\epsilon_{\text{machine}}$
denote the machine precision, such that $\left|\text{fl}(x)-x\right|\leq\epsilon_{\text{machine}}\vert x\vert$
and $\left|\text{fl}(x)\circledast\text{fl}(y)\right|\leq\epsilon_{\text{machine}}\vert\text{fl}(x)*\text{fl}(y)\vert$
for any real value $x$ and $y$ and any basic floating-point $\circledast$
corresponding to a basic arithmetic operator $*$ (such as addition
and multiplication), barring overflow and underflow. Let $\odot$
and $\otimes$ denote the inner-product and cross-product operators
under floating-point operations.
\begin{thm}
\label{thm:forward-error-LUPP}Given $\boldsymbol{A}$, let $\tilde{\boldsymbol{P}}^{T}\tilde{\boldsymbol{L}}\tilde{\boldsymbol{U}}$
be the LUPP of $\boldsymbol{A}\boldsymbol{D}^{-1}$ using floating-point
arithmetic, where $\boldsymbol{D}=\text{diag}\{\Vert\boldsymbol{a}_{1}\Vert,\Vert\boldsymbol{a}_{2}\Vert,\Vert\boldsymbol{a}_{3}\Vert\}$.
The absolute error of the computed determinant is bounded in the sense
that
\begin{equation}
\left|\prod_{i=1}^{3}d_{i}\tilde{u}_{ii}-\prod_{i=1}^{3}d_{i}u_{ii}\right|=\det(\boldsymbol{D})\mathcal{O}(\epsilon_{\text{machine}}).\label{eq:absolute-error-bound}
\end{equation}
\end{thm}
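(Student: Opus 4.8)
The plan is to reduce the determinant error to the classical componentwise backward error of Gaussian elimination with partial pivoting and to exploit the column equilibration to keep that backward error \emph{absolutely} small, with constants that are universal for $3\times3$ matrices. Write $\boldsymbol{B}=\boldsymbol{A}\boldsymbol{D}^{-1}$; by construction each column of $\boldsymbol{B}$ has unit $2$-norm (up to a $1+\mathcal{O}(\epsilon_{\text{machine}})$ factor from evaluating $\boldsymbol{D}$ in floating point), so $|b_{ij}|\le1$ and, by Hadamard's inequality, $|\det\boldsymbol{B}|\le1$. Since $\boldsymbol{D}$ is diagonal, $\det(\boldsymbol{A})=\det(\boldsymbol{B})\det(\boldsymbol{D})$; for the exact factorization $\prod_i u_{ii}=\det(\boldsymbol{P})\det(\boldsymbol{B})$ with $\det(\boldsymbol{P})=\pm1$, and $\prod_i d_i=\det(\boldsymbol{D})$. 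Hence it suffices to show that the computed pivots satisfy $\bigl|\prod_i\tilde{u}_{ii}-\det(\boldsymbol{B})\bigr|=\mathcal{O}(\epsilon_{\text{machine}})$ --- up to the sign $\det(\boldsymbol{P})$, which matters only in the degenerate case where the computed and exact pivoting orders disagree, and there $|\det\boldsymbol{B}|=\mathcal{O}(\epsilon_{\text{machine}})$ anyway, so, consistently with (\ref{eq:det_alupp_eq}), one may equivalently work with magnitudes --- and then multiply through by $\det(\boldsymbol{D})$.

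First I would invoke the classical componentwise backward-error bound for LUPP (see, e.g., \citep[Ch.~3]{Golub13MC}): the computed factors of the factorization of $\boldsymbol{B}$ satisfy $\tilde{\boldsymbol{L}}\tilde{\boldsymbol{U}}=\tilde{\boldsymbol{P}}\boldsymbol{B}+\boldsymbol{E}$ with $|\boldsymbol{E}|\le c\,\epsilon_{\text{machine}}\,|\tilde{\boldsymbol{L}}|\,|\tilde{\boldsymbol{U}}|$ entrywise for a modest constant $c$ independent of $\boldsymbol{B}$ (for $n=3$ one can alternatively derive this by tracking the handful of arithmetic operations directly). Partial pivoting forces $|\tilde{\ell}_{ij}|\le1$, and the growth factor of a $3\times3$ matrix under partial pivoting is bounded by $2^{2}=4$, so $|\tilde{u}_{ij}|\le4\max_{k,l}|b_{kl}|\le4$. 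Consequently every entry of $\boldsymbol{E}$ is $\mathcal{O}(\epsilon_{\text{machine}})$, so $\boldsymbol{F}:=\tilde{\boldsymbol{P}}^{T}\boldsymbol{E}$ has $\|\boldsymbol{f}_j\|_2=\mathcal{O}(\epsilon_{\text{machine}})$ for each column $j$; crucially, this bound does not degrade as $\det\boldsymbol{B}\to0$.

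Because $\tilde{\boldsymbol{L}}$ is unit lower triangular with its diagonal ones exactly representable and $\det(\tilde{\boldsymbol{P}})=\pm1$, the exact identity $\prod_i\tilde{u}_{ii}=\det(\tilde{\boldsymbol{U}})=\det(\tilde{\boldsymbol{L}}\tilde{\boldsymbol{U}})=\pm\det(\boldsymbol{B}+\boldsymbol{F})$ holds. Expanding $\det(\boldsymbol{B}+\boldsymbol{F})$ by multilinearity in the three columns yields $\det(\boldsymbol{B})$ plus seven ``mixed'' determinants, each containing at least one column of $\boldsymbol{F}$; bounding each such determinant by the product of the $2$-norms of its columns and using $\|\boldsymbol{b}_j\|_2\le1$ together with $\|\boldsymbol{f}_j\|_2=\mathcal{O}(\epsilon_{\text{machine}})$ shows the mixed terms sum to $\mathcal{O}(\epsilon_{\text{machine}})$ (those with two or three columns of $\boldsymbol{F}$ being $\mathcal{O}(\epsilon_{\text{machine}}^{2})$). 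Therefore $\bigl|\prod_i\tilde{u}_{ii}-\det(\boldsymbol{B})\bigr|=\mathcal{O}(\epsilon_{\text{machine}})$; multiplying by $\det(\boldsymbol{D})$ gives (\ref{eq:absolute-error-bound}), and since $|\prod_i\tilde{u}_{ii}|\le1+\mathcal{O}(\epsilon_{\text{machine}})$, forming the product $\prod_i d_i\tilde{u}_{ii}$ in floating point adds only a further relative perturbation $\mathcal{O}(\epsilon_{\text{machine}})$ that is absorbed into the same bound. The main obstacle is conceptual rather than computational: one must resist controlling the \emph{relative} error of $\det\boldsymbol{B}$ (which can be arbitrarily large near singularity) and instead use equilibration to force $\boldsymbol{E}$ --- and hence the determinant perturbation --- to be absolutely $\mathcal{O}(\epsilon_{\text{machine}})$ with a universal $3\times3$ constant, after which multilinearity of the determinant closes the argument.
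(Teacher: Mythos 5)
Your proposal is correct and follows essentially the same route as the paper's proof: both reduce to the backward stability of LUPP applied to the equilibrated matrix $\boldsymbol{B}=\boldsymbol{A}\boldsymbol{D}^{-1}$ (so that $\prod_i\tilde{u}_{ii}$ is, up to sign, the exact determinant of an $\mathcal{O}(\epsilon_{\text{machine}})$-perturbed $\boldsymbol{B}$), then bound the resulting determinant perturbation absolutely using the boundedness of the equilibrated entries, and finally multiply by $\det(\boldsymbol{D})$. Your componentwise backward-error bound with the $3\times3$ growth factor and the multilinearity expansion of $\det(\boldsymbol{B}+\boldsymbol{F})$ are just more explicit versions of the paper's normwise bound from \citep[Theorem 22.2]{trefethen1997numerical} and its cofactor-based condition number $\kappa_{\det}$, so the two arguments are interchangeable.
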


\begin{thm}
\label{thm:forward-error-TP}Given $\boldsymbol{A}$ composed of columns
$\boldsymbol{a}_{1}$, $\boldsymbol{a}_{2}$, and $\boldsymbol{a}_{3}$,
assuming that $\boldsymbol{a}_{2}$ and $\boldsymbol{a}_{3}$ are
in counterclockwise order with respect to the direction $\boldsymbol{a}_{1}$,
the absolute error of the computed triple product is bounded in the
sense that
\begin{equation}
\left|\boldsymbol{a}_{1}\cdot(\boldsymbol{a}_{2}\times\boldsymbol{a}_{3})-\boldsymbol{a}_{1}\odot(\boldsymbol{a}_{2}\otimes\boldsymbol{a}_{3})\right|=\frac{\text{\ensuremath{\det}}(\boldsymbol{A})}{\sigma}\mathcal{O}(\epsilon_{\text{machine}}),\label{eq:absolute-error-bound-1}
\end{equation}
assuming $\sigma\ll\epsilon_{\text{machine}}$, where $\sigma=\left|\boldsymbol{a}_{1}\cdot(\boldsymbol{a}_{2}\times\boldsymbol{a}_{3})\right|/\prod_{i}\Vert\boldsymbol{a}_{i}\Vert$.
\end{thm}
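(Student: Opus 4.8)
The plan is to carry out a two-stage forward error analysis of the composite map $\boldsymbol{a}_{1}\odot(\boldsymbol{a}_{2}\otimes\boldsymbol{a}_{3})$ directly from the rounding model stated above --- first for the cross product, then for the inner product --- and afterward to rewrite the resulting absolute bound in the claimed form using the definition of $\sigma$.

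First I would bound the cross product componentwise. Each component of $\boldsymbol{a}_{2}\times\boldsymbol{a}_{3}$ is a difference of two products $a_{2,j}a_{3,k}-a_{2,k}a_{3,j}$, so applying the model to the two multiplications and the single subtraction gives
\[
\bigl|(\boldsymbol{a}_{2}\otimes\boldsymbol{a}_{3})_{i}-(\boldsymbol{a}_{2}\times\boldsymbol{a}_{3})_{i}\bigr|\le\bigl(2\epsilon_{\text{machine}}+\mathcal{O}(\epsilon_{\text{machine}}^{2})\bigr)\bigl(|a_{2,j}a_{3,k}|+|a_{2,k}a_{3,j}|\bigr).
\]
The essential point --- and the source of the amplification the theorem exposes --- is that this bound is governed by the \emph{sum} of the two products rather than by their possibly tiny difference; a Cauchy--Schwarz estimate on the two-component subvectors involved gives $|a_{2,j}a_{3,k}|+|a_{2,k}a_{3,j}|\le\Vert\boldsymbol{a}_{2}\Vert\Vert\boldsymbol{a}_{3}\Vert$, hence $\Vert\boldsymbol{a}_{2}\otimes\boldsymbol{a}_{3}-\boldsymbol{a}_{2}\times\boldsymbol{a}_{3}\Vert\le\mathcal{O}(\epsilon_{\text{machine}})\Vert\boldsymbol{a}_{2}\Vert\Vert\boldsymbol{a}_{3}\Vert$ and, in particular, $\Vert\boldsymbol{a}_{2}\otimes\boldsymbol{a}_{3}\Vert\le(1+\mathcal{O}(\epsilon_{\text{machine}}))\Vert\boldsymbol{a}_{2}\Vert\Vert\boldsymbol{a}_{3}\Vert$.

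Next I would analyze the inner product. Writing $\boldsymbol{v}=\boldsymbol{a}_{2}\otimes\boldsymbol{a}_{3}$ and splitting
\[
\boldsymbol{a}_{1}\odot\boldsymbol{v}-\boldsymbol{a}_{1}\cdot(\boldsymbol{a}_{2}\times\boldsymbol{a}_{3})=\bigl(\boldsymbol{a}_{1}\odot\boldsymbol{v}-\boldsymbol{a}_{1}\cdot\boldsymbol{v}\bigr)+\boldsymbol{a}_{1}\cdot\bigl(\boldsymbol{v}-\boldsymbol{a}_{2}\times\boldsymbol{a}_{3}\bigr),
\]
I would bound the first term by the elementary three-term dot-product estimate $|\boldsymbol{a}_{1}\odot\boldsymbol{v}-\boldsymbol{a}_{1}\cdot\boldsymbol{v}|\le\mathcal{O}(\epsilon_{\text{machine}})\sum_{i}|a_{1,i}v_{i}|\le\mathcal{O}(\epsilon_{\text{machine}})\Vert\boldsymbol{a}_{1}\Vert\Vert\boldsymbol{v}\Vert$ (obtained by applying the model to the three multiplications and two additions), and the second term by Cauchy--Schwarz together with the cross-product bound above, $|\boldsymbol{a}_{1}\cdot(\boldsymbol{v}-\boldsymbol{a}_{2}\times\boldsymbol{a}_{3})|\le\Vert\boldsymbol{a}_{1}\Vert\,\Vert\boldsymbol{v}-\boldsymbol{a}_{2}\times\boldsymbol{a}_{3}\Vert\le\mathcal{O}(\epsilon_{\text{machine}})\Vert\boldsymbol{a}_{1}\Vert\Vert\boldsymbol{a}_{2}\Vert\Vert\boldsymbol{a}_{3}\Vert$. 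Combining the two and using $\Vert\boldsymbol{v}\Vert\le(1+\mathcal{O}(\epsilon_{\text{machine}}))\Vert\boldsymbol{a}_{2}\Vert\Vert\boldsymbol{a}_{3}\Vert$ yields the absolute bound $|\boldsymbol{a}_{1}\odot(\boldsymbol{a}_{2}\otimes\boldsymbol{a}_{3})-\boldsymbol{a}_{1}\cdot(\boldsymbol{a}_{2}\times\boldsymbol{a}_{3})|\le\mathcal{O}(\epsilon_{\text{machine}})\Vert\boldsymbol{a}_{1}\Vert\Vert\boldsymbol{a}_{2}\Vert\Vert\boldsymbol{a}_{3}\Vert$. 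Finally, since $\sigma=|\boldsymbol{a}_{1}\cdot(\boldsymbol{a}_{2}\times\boldsymbol{a}_{3})|/(\Vert\boldsymbol{a}_{1}\Vert\Vert\boldsymbol{a}_{2}\Vert\Vert\boldsymbol{a}_{3}\Vert)$ and the counterclockwise hypothesis forces $\det(\boldsymbol{A})=\boldsymbol{a}_{1}\cdot(\boldsymbol{a}_{2}\times\boldsymbol{a}_{3})>0$, we have $\Vert\boldsymbol{a}_{1}\Vert\Vert\boldsymbol{a}_{2}\Vert\Vert\boldsymbol{a}_{3}\Vert=\det(\boldsymbol{A})/\sigma$; substituting gives (\ref{eq:absolute-error-bound-1}). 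The hypothesis relating $\sigma$ and $\epsilon_{\text{machine}}$ should not be needed for the bound itself; rather, it delimits the regime the bound is meant to describe --- where $\epsilon_{\text{machine}}/\sigma$ is the effective relative error of the naive triple product, which is precisely what motivates the anchoring in (\ref{eq:det_anchored}) --- and it lets us absorb the $\mathcal{O}(\epsilon_{\text{machine}}^{2})$ remainders.

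I expect the only genuine subtlety to be the cross-product step: one must resist the (false) temptation to bound its rounding error by $\mathcal{O}(\epsilon_{\text{machine}})\Vert\boldsymbol{a}_{2}\times\boldsymbol{a}_{3}\Vert$ and instead keep the honest $\Vert\boldsymbol{a}_{2}\Vert\Vert\boldsymbol{a}_{3}\Vert$ scaling dictated by the cancellation in $a_{2,j}a_{3,k}-a_{2,k}a_{3,j}$, because it is exactly this mismatch that produces the $1/\sigma$ factor. Everything else is routine bookkeeping with the rounding model together with the triangle and Cauchy--Schwarz inequalities.
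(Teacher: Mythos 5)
Your proposal is correct, and it reaches the stated bound by a genuinely different (and more elementary) route than the paper. The paper's proof is a mixed backward/forward analysis: it absorbs most of the rounding factors into perturbed inputs $\tilde{\boldsymbol{a}}$ and $\tilde{\boldsymbol{b}}$, so that the computed value equals an exact triple product of nearby vectors plus a remainder of the form $\tilde{a}_{1}\tilde{b}_{3}c_{2}\epsilon_{1}''+\tilde{a}_{2}\tilde{b}_{1}c_{3}\epsilon_{2}''+\tilde{a}_{3}\tilde{b}_{2}c_{1}\epsilon_{3}''$, and then bounds the remainder and the input perturbation separately; the hypothesis $\sigma\gg\epsilon_{\text{machine}}$ enters essentially there, to guarantee that the perturbed triple product has magnitude comparable to $\sigma\prod_{i}\Vert\boldsymbol{a}_{i}\Vert$. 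You instead run a plain forward analysis: a componentwise bound $\mathcal{O}(\epsilon_{\text{machine}})(\vert a_{2,j}a_{3,k}\vert+\vert a_{2,k}a_{3,j}\vert)$ on the computed cross product, a standard dot-product bound, and Cauchy--Schwarz, arriving directly at $\mathcal{O}(\epsilon_{\text{machine}})\prod_{i}\Vert\boldsymbol{a}_{i}\Vert$ and then rewriting $\prod_{i}\Vert\boldsymbol{a}_{i}\Vert=\det(\boldsymbol{A})/\sigma$ via the definition of $\sigma$ and the sign convention. Both arguments isolate the same essential phenomenon---the cross-product rounding error scales with $\Vert\boldsymbol{a}_{2}\Vert\Vert\boldsymbol{a}_{3}\Vert$ rather than with $\Vert\boldsymbol{a}_{2}\times\boldsymbol{a}_{3}\Vert$, which is exactly the $1/\sigma$ amplification. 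What the paper's backward-style decomposition buys is structural parallelism with the ALUPPE analysis in the preceding appendix (computed value $=$ exact value of perturbed data), which supports the paper's framing of $1/\sigma$ as a condition number; what your version buys is brevity and the sharper observation---which you state explicitly and which is worth retaining---that the absolute bound itself holds without any assumption relating $\sigma$ to $\epsilon_{\text{machine}}$, that assumption serving only to delimit the regime in which $\epsilon_{\text{machine}}/\sigma$ is meaningfully interpreted as a relative error. (You also silently corrected the statement's typo $\sigma\ll\epsilon_{\text{machine}}$, which should read $\sigma\gg\epsilon_{\text{machine}}$ as in Remark~\ref{rem:instability-triple-product}.)
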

The proofs of Theorems~\ref{thm:forward-error-LUPP} and \ref{thm:forward-error-TP}
involve some detailed backward error analysis. For completeness, we
present the proofs in \ref{sec:Error-analysis-of-ALUPP} and \ref{sec:Anchored-triple-product},
respectively. 
\begin{rem}
\label{rem:instability-triple-product} At a high level, $1/\sigma$
in Theorem~\ref{thm:forward-error-TP} plays the role of the condition
number in computing the cross product. The assumption of $\sigma\gg\epsilon_{\text{machine}}$
in the theorem is for the ease of presentation, because the triple
product is nonlinear in $\boldsymbol{A}$, so some simplification
is required. If $\sigma$ is close to $\epsilon_{\text{machine}}$,
rounding errors would always dominate. Hence, this assumption does
not lead to a loss of generality from a practical point of view. As
a practical guideline, an accurate and stable algorithm should make
$\sigma$ as large as possible when computing the triple product for
a given triangulation.
\end{rem}
In the context of computing the Jacobian determinant, we need to substitute
$\boldsymbol{x}_{k}$, $\boldsymbol{x}_{k\oplus1}-\boldsymbol{x}_{k}$,
and $\boldsymbol{x}_{k\oplus2}-\boldsymbol{x}_{k}$ for $\boldsymbol{a}_{1}$,
$\boldsymbol{a}_{2}$, and $\boldsymbol{a}_{3}$ in the preceding
theorems. We can then conclude that $\det(\boldsymbol{A})$ is proportional
to $r\text{area}(\boldsymbol{x}_{1}\boldsymbol{x}_{2}\boldsymbol{x}_{3})$,
so is $\det(\boldsymbol{D})$, assuming that the minimum angle of
the triangle $\boldsymbol{x}_{1}\boldsymbol{x}_{2}\boldsymbol{x}_{3}$
is bounded away from 0. Hence, the relative error of the computed
determinant from LUPP is then expected to be $\mathcal{O}(\epsilon_{\text{machine}})$.
For ATP, let $\theta$ denote the angle between $\boldsymbol{x}_{k\oplus1}-\boldsymbol{x}_{k}$
and $\boldsymbol{x}_{k\oplus2}-\boldsymbol{x}_{k}$. Then, $\sigma$
is proportional to $\sin(\theta)$ for sufficiently small $\theta$,
and the assumption $\sigma\gg\epsilon_{\text{machine}}$ is satisfied
when $\theta$ is bounded away from $\epsilon_{\text{machine}}$.
By choosing the anchor $\boldsymbol{x}_{k}$ to be the vertex incident
on the two shorter edges of the triangle $\boldsymbol{x}_{1}\boldsymbol{x}_{2}\boldsymbol{x}_{3}$,
we ensure $\theta$ is the maximum angle in the triangle and hence
is as close to $90^{\circ}$ as possible, so $\det(\boldsymbol{A})/\det(\boldsymbol{D})$
and $\sigma$ are approximately maximized for ALUPPE and ATP, respectively,
and they are accurate and stable for almost all practical applications.
In contrast, for the naive computation of the triple product in (\ref{eq:triple-product}),
$\sigma$ is as small as to $\sin(\theta_{\min})h^{2}$, where $\theta_{\min}$
is the smallest angle in $\boldsymbol{x}_{1}\boldsymbol{x}_{2}\boldsymbol{x}_{3}$.
Hence, a naive computation of the triple product is unstable for small
$h$ and small $\theta_{\min}$, and its error is expected to be at
least $\mathcal{O}(1/h^{2})$ larger than that of ATP.

One shortcoming of the analysis is that it omitted the potential cancellation
errors in $\boldsymbol{x}_{k\oplus1}-\boldsymbol{x}_{k}$ and $\boldsymbol{x}_{k\oplus2}-\boldsymbol{x}_{k}$.
If the spherical triangles are excessively small, these cancellation
errors may dominate, and the relative errors from ALUPPE and ATP could
be arbitrarily large. Fortunately, the cancellation errors are typically
negligible compared to the other errors in practice, even for the
finest meshes.

To demonstrate the validity of our analysis, we compare the standard
triple product, standard LUPP, and anchored computations for small
spherical triangles, along with LUPP with a different vertex as the
anchor, which we refer to as ``off-anchored.'' We generated 1000
random spherical triangles on a unit sphere, of which the longest
edge is $\sim0.01$ and the shortest edge length is $\sim0.0001$.
We applied the algorithms using double-precision floating-point arithmetic
and used variable-precision arithmetic with 32-digits of precision
to compute the reference solutions. As can be seen from Figure~\ref{fig:Comparison-of-different-determinants},
the anchored computations are about six and four orders of magnitude
more accurate than the standard triple product and LUPP, respectively.
The standard triple product is the least accurate due to its potential
reduction of $\sigma$ by a factor of $h^{2}$ compared to ATP. With
our choice of anchor, the ATP and ALUPPE have comparable performance.
The off-anchored TP is two orders of magnitude worse than ATP due
to the reduction of $\sigma$ by a factor of up to $\sin(\theta_{\min})/\sin(\theta_{\max})$,
and similarly for the off-anchored LUPP. ATP has slightly smaller
errors than ALUPPE, probably because ATP involves fewer floating-point
operators and hence less accumulation of rounding errors. Hence, we
use ATP for its better accuracy, efficiency, and simplicity. Note
that the mean and minimum errors of ATP are nearly coincident for
all cases, indicating that the maximum errors are outliers likely
due to near degeneracies (i.e., extremely small triangles). Hence,
we expect the errors to be close to machine precision for spherical
triangles from practical applications, as we will demonstrate in Section~\ref{sec:Numerical-Experiments}.

\begin{figure}
\begin{centering}
\includegraphics[width=0.6\columnwidth]{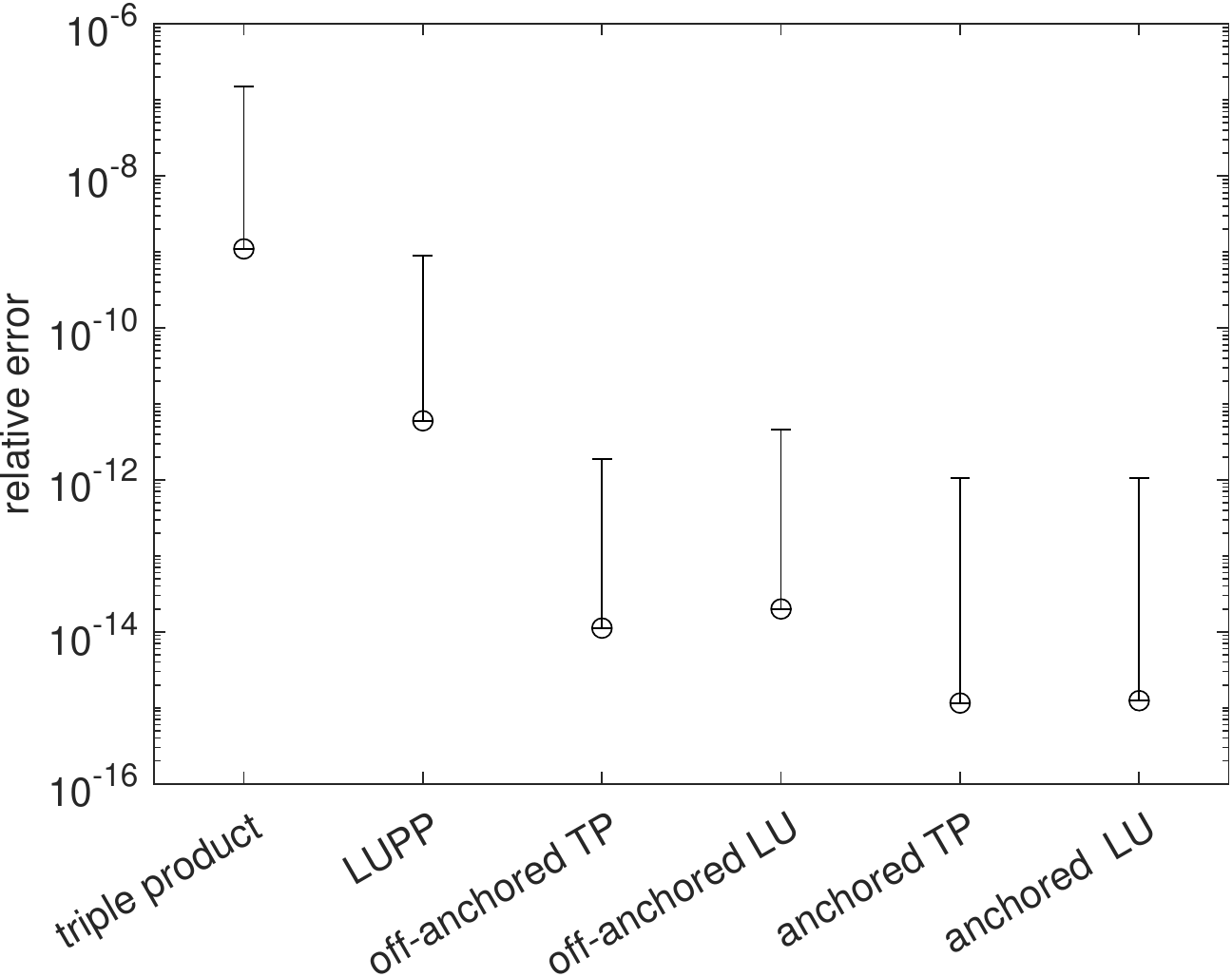}
\par\end{centering}
\caption{\label{fig:Comparison-of-different-determinants}Comparison of different
techniques in computing the determinant of random spherical triangles.
Circles indicate the mean errors, and the vertical bars indicate the
range of the relative errors.}
\end{figure}

\subsection{Anchored radially projected integration on spheres}

We put together the preceding components to obtain an accurate and
stable algorithm for numerical integration on a spherical triangle.
We refer to the algorithm as \emph{anchored radially projected integration
on spherical triangles}, or \emph{ARPIST}. For completeness, Algorithm~\ref{alg:ARPIST}
outlines ARPIST with a given function $f$. We assumed that the quadrature
points $\{\boldsymbol{\xi}_{i}\}$ and the associated weights $\{w_{i}\}$
for the reference triangles are pre-tabulated in the procedure, and
those rules can be found, for example, in \citep{cools2003encyclopedia}
or in the ARPIST GitHub repository. One could replace the function
$f$ by an array of its values at the radially projected quadrature
points, i.e., $\boldsymbol{f}=[f(\boldsymbol{p}(\boldsymbol{\xi}_{i}))]_{i}$.

\begin{algorithm}
\caption{\label{alg:ARPIST}Anchored radially projected integration on a spherical
triangle}

\begin{algorithmic}[1]

\Procedure{arpist}{$\boldsymbol{x}_{1},\boldsymbol{x}_{2},\boldsymbol{x}_{3},f$}\Comment{Integrate
$f$ on spherical tri. $\boldsymbol{x}_{1}\boldsymbol{x}_{2}\boldsymbol{x}_{3}$}

\State $r\leftarrow\Vert\boldsymbol{x}_{1}\Vert;t\leftarrow0$ 

\For{ $i=1$ to npoints}\Comment{Numerical quadrature over flat
tri. $\boldsymbol{x}_{1}\boldsymbol{x}_{2}\boldsymbol{x}_{3}$}

    \State $\boldsymbol{x}\leftarrow\boldsymbol{x}_{1}+\xi_{i}(\boldsymbol{x}_{2}-\boldsymbol{x}_{1})+\eta_{i}(\boldsymbol{x}_{3}-\boldsymbol{x}_{1})$

    \State $t\leftarrow t+w_{i}\,f(r\boldsymbol{x}/\Vert\boldsymbol{x}\Vert)/\Vert\boldsymbol{x}\Vert^{3}$

\EndFor

\State $k\leftarrow\arg\min_{k}\{\Vert\boldsymbol{x}_{k\%3+1}-\boldsymbol{x}_{k}\Vert+\Vert\boldsymbol{x}_{(k+1)\%3+1}-\boldsymbol{x}_{k}\Vert\}$\Comment{Select
anchor}

\State\textbf{return }$tr^{2}\boldsymbol{x}_{k}\cdot\left((\boldsymbol{x}_{k\%3+1}-\boldsymbol{x}_{k})\times(\boldsymbol{x}_{(k+1)\%3+1}-\boldsymbol{x}_{k})\right)$\Comment{Scale
by $r^{2}\det[\boldsymbol{X}]$}

\EndProcedure

\end{algorithmic}
\end{algorithm}

By considering both the truncation and rounding errors, ARPIST can
integrate a sufficiently smooth function stably and accurately, as
stated by the following corollary.
\begin{cor}
\label{cor:overall-error} Given a spherical triangle $S$ with vertices
$\boldsymbol{x}_{1}$, $\boldsymbol{x}_{2}$, and $\boldsymbol{x}_{3}$
in counterclockwise order with its corresponding linear triangle $T$,
if the integrand $f(\boldsymbol{p}):S\rightarrow\mathbb{R}$ is continuously
differentiable to $p$th order, assuming that $\text{det}(\boldsymbol{X})\geq\sigma\Vert\boldsymbol{x}_{1}\Vert\Vert\boldsymbol{x}_{2}\Vert\Vert\boldsymbol{x}_{3}\Vert$
for constant $\sigma\gg\epsilon_{\text{machine}}$, then ARPIST evaluates
the spherical integration with a total error of $\text{area}(T)\mathcal{O}(h^{p+1})+\det(\boldsymbol{X})/\sigma\mathcal{O}(\epsilon_{\text{machine}}).$
\end{cor}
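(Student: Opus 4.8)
The plan is to separate the total error of Algorithm~\ref{alg:ARPIST} into two contributions that can be estimated independently: the \emph{truncation error} of the degree-$p$ Gaussian rule applied to the transformed integrand, and the \emph{floating-point evaluation error} committed while computing the right-hand side of~(\ref{eq:rp_quadrature}). Write $V=\int_S f\,\text{d}A$ for the exact integral, $Q=r^{2}\det[\boldsymbol{x}_{1},\boldsymbol{x}_{2},\boldsymbol{x}_{3}]\sum_i\bigl(w_i/\|\boldsymbol{x}(\boldsymbol{\xi}_i)\|^{3}\bigr)f(\boldsymbol{p}(\boldsymbol{\xi}_i))$ for the exact value of the quadrature formula, and $\tilde{Q}$ for the value actually returned by the procedure. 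Then $\tilde{Q}-V=(\tilde{Q}-Q)+(Q-V)$, and it suffices to bound the two terms on the right and add them by the triangle inequality.

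The first term is immediate: since $f(\boldsymbol{p})$ is continuously differentiable to $p$th order, Theorem~\ref{thm:rp-quadrature} gives $|Q-V|=\text{area}(T)\,\mathcal{O}(h^{p+1})$, which is the first summand in the claimed bound.

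For the rounding term $|\tilde{Q}-Q|$ I would follow Algorithm~\ref{alg:ARPIST} in execution order. In the accumulation loop, each iteration forms the linear combination $\boldsymbol{x}(\boldsymbol{\xi}_i)$ of the vertices, its norm, the radial projection $r\boldsymbol{x}/\|\boldsymbol{x}\|$, the evaluation of $f$ there, and the scaled quotient $w_i f/\|\boldsymbol{x}\|^{3}$; because $\|\boldsymbol{x}(\boldsymbol{\xi}_i)\|=r+\mathcal{O}(h^{2})\ge C_2>0$ (as already used in the proof of Theorem~\ref{thm:rp-quadrature}), none of these steps is ill-conditioned, so every summand is obtained with relative error $\mathcal{O}(\epsilon_{\text{machine}})$ and, summing the fixed number $q$ of terms, the accumulated value $t$ satisfies $\tilde{t}=t\bigl(1+\mathcal{O}(\epsilon_{\text{machine}})\bigr)$, the $\mathcal{O}$ absorbing the modest conditioning of the finite sum. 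For the scaling factor, the anchor selected on line~7 is incident on the two shortest edges, so the angle $\theta$ between $\boldsymbol{x}_{k\oplus1}-\boldsymbol{x}_k$ and $\boldsymbol{x}_{k\oplus2}-\boldsymbol{x}_k$ is the largest angle of $\boldsymbol{x}_{1}\boldsymbol{x}_{2}\boldsymbol{x}_{3}$, which (as in Remark~\ref{rem:instability-triple-product}) makes $\sigma$ essentially maximal; the standing hypothesis $\det(\boldsymbol{X})\ge\sigma\|\boldsymbol{x}_1\|\|\boldsymbol{x}_2\|\|\boldsymbol{x}_3\|$ with $\sigma\gg\epsilon_{\text{machine}}$ is then exactly what Theorem~\ref{thm:forward-error-TP} needs, and that theorem yields a computed triple product differing from $\det[\boldsymbol{x}_{1},\boldsymbol{x}_{2},\boldsymbol{x}_{3}]$ by $(\det(\boldsymbol{X})/\sigma)\,\mathcal{O}(\epsilon_{\text{machine}})$ (if ALUPPE is used instead, Theorem~\ref{thm:forward-error-LUPP} gives the same conclusion). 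Multiplying the perturbed determinant by $\text{fl}(r^{2})$ and by $\tilde{t}$ and collecting the relative perturbations, the dominant contribution to $\tilde{Q}-Q$ is $r^{2}|t|\,(\det(\boldsymbol{X})/\sigma)\,\mathcal{O}(\epsilon_{\text{machine}})$; since $r^{2}\det(\boldsymbol{X})\,|t|$ differs from $|V|\le\max_S|f|\cdot\text{area}(S)=\mathcal{O}(\text{area}(T))$ by at most the truncation term, the prefactor $r^{2}|t|$ is $\mathcal{O}(1)$ with a constant depending only on $f$, $r$, and the shape of $T$, whence $|\tilde{Q}-Q|=(\det(\boldsymbol{X})/\sigma)\,\mathcal{O}(\epsilon_{\text{machine}})$. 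Adding this to the truncation bound finishes the proof.

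I expect the main difficulty to be the bookkeeping in that last step --- turning the multiplicative interaction of the two perturbations (a perturbed determinant times a perturbed sum) into a clean additive bound, and in particular arguing that the $r^{2}|t|$ prefactor stays $\mathcal{O}(1)$ rather than growing under mesh refinement. A secondary subtlety, which I would only flag and not fold into the stated bound (following the discussion in Section~\ref{subsec:Stable-determinant}), is the cancellation error in forming the anchored edge vectors $\boldsymbol{x}_{k\oplus1}-\boldsymbol{x}_{k}$ and $\boldsymbol{x}_{k\oplus2}-\boldsymbol{x}_{k}$, which is negligible in practice for all but pathologically small triangles.
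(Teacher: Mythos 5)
Your proposal is correct and follows essentially the same route as the paper, which disposes of the corollary in a single sentence by combining the truncation bound of Theorem~\ref{thm:rp-quadrature} with the determinant rounding bound of Theorems~\ref{thm:forward-error-LUPP}/\ref{thm:forward-error-TP}. You simply make explicit the decomposition $\tilde{Q}-V=(\tilde{Q}-Q)+(Q-V)$ and the $\mathcal{O}(1)$ bookkeeping for the prefactor $r^{2}|t|$ that the paper leaves implicit.
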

Corollary~\ref{cor:overall-error} directly follows from Theorems~\ref{thm:rp-quadrature}
and \ref{thm:forward-error-LUPP}. Typically, the truncation errors
would dominate, and we expect the integration to approach (near) machine
precision for sufficiently smooth functions. 
\begin{rem}
\label{rem:The-stability-of}The stability of ARPIST makes it well
suited to develop more advanced integration techniques analogous to
their counterparts in 1D. For example, we can develop adaptive quadrature
rules by recursively subdividing the triangles recursively until the
truncation errors are close to machine precision \citep[Section 8.3.6]{heath2018scientific},
as we will demonstrate in Section~\ref{sec:Numerical-Experiments}.
As another example, we can apply Romberg integration by leveraging
Richardson extrapolation to accelerate convergence \citep[Section 8.7]{heath2018scientific}.
\end{rem}

\section{\label{sec:Numerical-Experiments}Numerical Experiments}

In this section, we report numerical experimentation with ARPIST and
compare it with a commonly used technique for computing the areas
of spherical triangles \citep{beyer1991crc} and two recently proposed
techniques for computing spherical integration \citep{beckmann2014local,reeger2016numerical}.

\subsection{\label{subsec:Area-computation-of}Accurate and stable computation
of spherical-triangle area}

We first apply ARPIST to the accurate and stable computation of the
area of a spherical triangle, which is mathematically equivalent to
the integration of unity, i.e., $f(\boldsymbol{p})\equiv1$. This
problem is of particular importance in enforcing global conservation
in earth modeling. Hence, it is desirable to be computed as accurately
as possible and ideally to (near) machine precision.  Presently,
this area computation is typically carried out by applying the centuries-old
theorems due to Girard and L'Huilier, but such a technique is often
observed to be inaccurate.  In particular, due to Girard's theorem,
the area of a spherical triangle $S$ with radius $R$ is mathematically
equal to $A=R^{2}E$, where $E$ is the \emph{spherical excess} of
$S$; due to L'Huilier's Theorem \citep{beyer1991crc},
\begin{equation}
E=4\arctan\sqrt{\tan\left(\frac{s}{2}\right)\tan\left(\frac{s-a_{1}}{2}\right)\tan\left(\frac{s-a_{2}}{2}\right)\tan\left(\frac{s-a_{3}}{2}\right)},\label{eq:LHuilier}
\end{equation}
where the $a_{i}$ are the length of sides on the spherical triangle
and $s=(a_{1}+a_{2}+a_{3})/2$ is the semiperimeter of $S$. Since
(\ref{eq:LHuilier}) is the core of this computation, we refer to
the approach as L'Huilier's theorem or \emph{LT}. Due to its popularity,
we will use LT as the baseline in assessing ARPIST for this problem.
As a side product, we will reveal the numerical instabilities in LT
that have led to the inaccuracy of this popular technique.

For the area computation to be accurate and stable, it needs to be
insensitive to the sizes and shapes of the triangles. More precisely,
it should be stable when the maximum edge length $h$ or the minimum
angle $\theta_{\min}$ tends to $0$ (or tends to $\epsilon_{\text{machine}}^{\alpha}$
for some $0.5\lesssim\alpha<1$ so that rounding errors would not
dominate truncation errors). We assess the accuracy of ARPIST using
double-precision arithmetic as $h$ or $\theta_{\min}$ tends to zero
in Figure~\ref{fig:max-edge-length}. We computed the reference solution
using (\ref{eq:LHuilier}) with 128-digits quadruple-precision floating-point
numbers. For ARPIST, we report the results using degree-4 and degree-8
quadrature rules. In addition, we report the results for an adaptive
procedure as we alluded to in Remark~\ref{rem:The-stability-of}.
The adaptive ARPIST applies degree-4 and degree-8 Gaussian quadrature
rules if $h\leq h_{1}$ and $h_{1}<h\leq h_{2}$, respectively, and
recursively splits a larger triangle if $h>h_{2}$, where $h_{1}$
and $h_{2}$ are determined experimentally. Figure~\ref{fig:max-edge-length}(a)
shows the relative errors in computed areas of spherical triangles
on a unit sphere for $\pi/500\leq\theta_{\min}\leq\pi/3$, where $h\approx0.26$.
It can be seen that degree-4, degree-8, and adaptive ARPIST are all
insensitive to $\theta_{\min}.$ Figure~\ref{fig:max-edge-length}(b)
shows the errors for equilateral triangles with $10^{-3}\leq h\leq1$.
It is clear that degree-4 and degree-8 ARPIST achieved near machine
precision (below $10^{-15}$) for $h\lesssim0.004$ and $h\lesssim0.05$,
respectively. Hence, we set $h_{1}=0.004$ and $h_{2}=0.05$ in adaptive
ARPIST.  

\begin{figure}
\subfloat[$h\approx0.26$ and various $\theta_{\min}$.]{\includegraphics[width=0.5\textwidth]{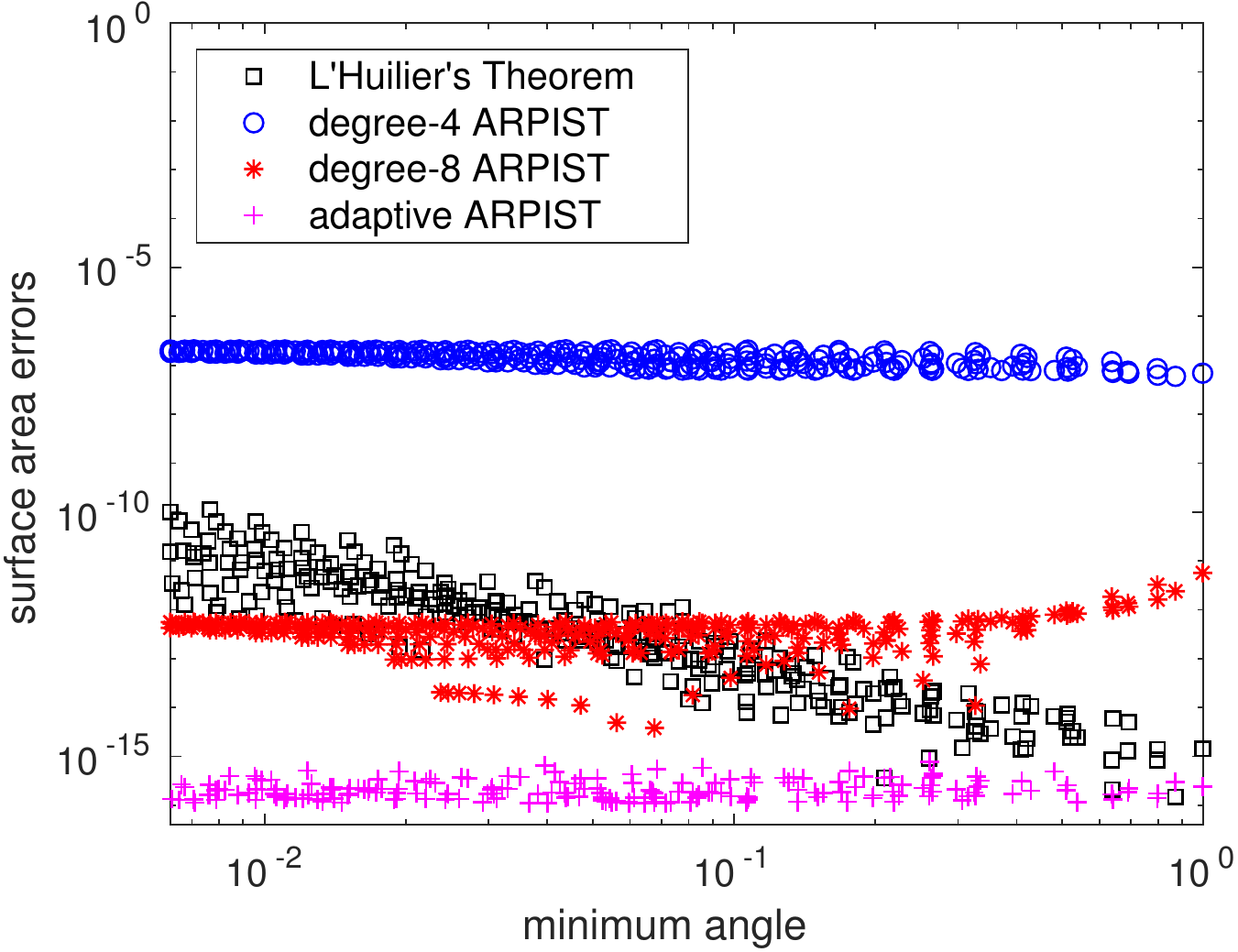}}~\subfloat[Various $h$ and $\theta_{\min}=\text{60}^{\circ}$.]{\includegraphics[width=0.5\textwidth]{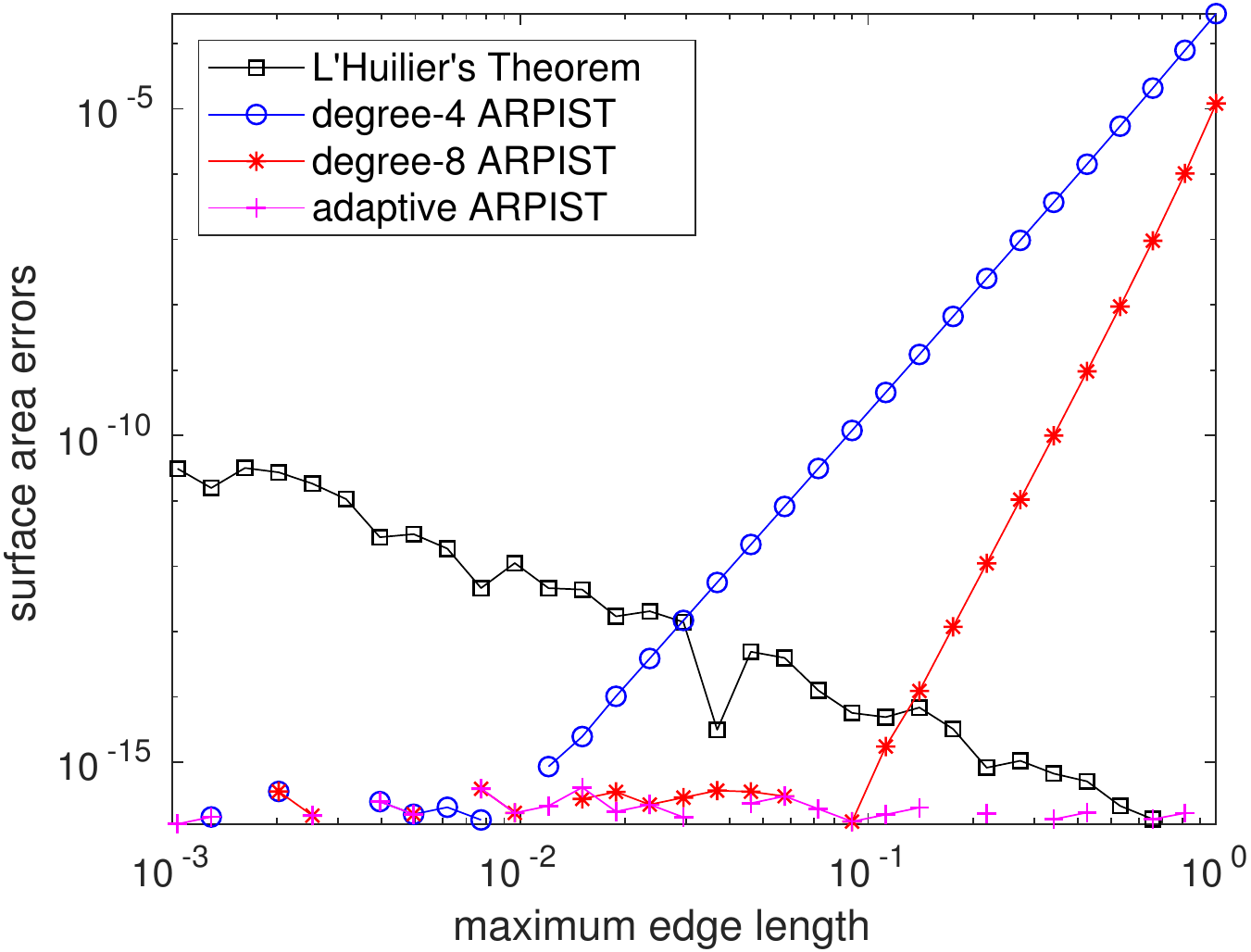}}
\caption{\label{fig:max-edge-length}Relative errors of computed areas of spherical
triangles using ARPIST versus L'Huilier's Theorem in double precision
for isosceles triangles with various minimum angles and edge lengths.
In (a), the errors of ARPIST do not change much for spherical triangles
with different minimum angles, while the errors of the L'Huilier's
Theorem increase as the minimum angles decrease. In (b), the errors
of degree-4 and degree-8 ARPIST decrease as the maximum edge length
decrease until they reach the machine precision, while the errors
of the L'Huilier's Theorem increase as the maximum edge lengths decrease.}
\end{figure}

In Figure~\ref{fig:max-edge-length}, it is also evident that the
relative errors in LT increased steadily as $\theta_{\min}$ or $h$
decreased. Despite its remarkably accuracy for $h\approx1$, the errors
of LT reached about $10^{-10}$ for poorly shaped large triangles
in Figure~\ref{fig:max-edge-length}(a) and $10^{-7}$ for poorly
shaped small triangles. The poor accuracy of LT for poor-shaped triangles
is due to the cancellation errors in $s-a_{i}$ in (\ref{eq:LHuilier}),
which are catastrophic when $\theta_{\min}$ is close to 0. The instability
of LT for small well-shaped triangles, on the other hand, is due to
the astronomical (absolute) condition number of the square-root operation,
i.e., 
\begin{equation}
\kappa(\sqrt{x})=\lim_{\epsilon\rightarrow0}\sup_{\epsilon}\frac{\vert\sqrt{x+\epsilon}-\sqrt{x}\vert}{\vert\epsilon\vert}=\frac{\text{d}\sqrt{x}}{\text{d}x}=\frac{1}{2\sqrt{x}},\label{eq:abs-cond-number}
\end{equation}
which tends to $\infty$ as $x$ approaches $0$. As $h$ approaches
$0$, the operand of the square-root operation in (\ref{eq:LHuilier})
tends to 0 because $s/2$, $(s-a_{i})/2$, and their tangents all
tend to $0$. Hence, this condition number in (\ref{eq:abs-cond-number})
drastically amplifies the rounding and cancellation errors, leading
to large errors for LT as seen in Figure~\ref{fig:max-edge-length}.
Hence, LT is unstable for fine meshes even with well-shaped triangles,
but it is particularly disastrous for those with poor-shaped small
triangles. To the best of our knowledge, adaptive ARPIST offers the
first viable alternative for general meshes to achieve (near) machine
precision for computing spherical-triangle areas, as long as $h$
and $\theta_{\min}$ are sufficiently large relative to $\epsilon_{\text{machine}}$.

\subsection{Integration of smooth analytic functions\label{subsec:Integration-of-smooth}}

To assess the accuracy and efficiency of ARPIST for integrating smooth
analytic function on spheres, we compare it with two techniques, namely
LSQST \citep{beckmann2014local} and SQRBF \citep{reeger2016numerical,reeger2015sqrbf}.
We chose these two techniques for comparison because they can be applied
to any given triangulation of a sphere and they were developed recently.

\subsubsection{Comparison with LSQST}

We first compare ARPIST with LSQST. The source code of LSQST is unavailable,
and its algorithm is very difficult to implement, so we applied the
adaptive ARPIST to solve a representative test problem as described
in Section~2.2 of \citep{beckmann2014local}. In particular, we integrate
the test function 
\begin{equation}
f^{(q,s)}(\boldsymbol{x})=\sum_{l=1}^{9}\alpha_{l}^{(q)}G_{s}\left(\left\langle \boldsymbol{x},\boldsymbol{\eta}_{l}^{(q)}\right\rangle _{2}\right),\label{eq:test-function}
\end{equation}
where the coefficients $\alpha_{l}^{(q)}$ and the centers $\boldsymbol{\eta}_{l}^{(q)}$
are randomly chosen, and
\[
G_{s}(t)=\frac{(1-s)^{3}}{(1-2st+s^{2})^{3/2}}
\]
is the Poisson kernel for some $s\in[0,1)$.\footnote{In \citep{beckmann2014local}, the authors used $h$ instead of $s$.
We use $s$ to avoid the confusion with edge length.} We chose the parameters
\[
s=0.8,0.9,0.95,0.97,0.975,0.98,0.985,0.99,0.995,
\]
which is a subset of those in \citep{beckmann2014local}. We excluded
the two cases $s=0.999$ and $s=0.9999$, since $G_{s}(t)$ tends
to a discontinuous function as $s$ approaches $1$ and robust resolution
of discontinuities is a separate topic in its own right (see, e.g.,
\citep{li2020wls}). As in \citep{beckmann2014local}, we compute
the average errors for 50 random pairs $(\alpha_{l}^{(q)},\boldsymbol{\eta}_{l}^{(q)})$,
\[
E(s)=\frac{1}{50}\sum_{q=1}^{50}\frac{\left|I_{\mathbb{S}^{2}}(f^{(q,s)})-Q_{\mathbb{S}^{2}}(f^{(q,s)})\right|}{\left|I_{\mathbb{S}^{2}}(f^{(q,s)})\right|},
\]
where 
\[
I_{\mathbb{S}^{2}}(f^{(q,s)})=\int_{\mathbb{S}^{2}}f^{(q,s)}\left(\left\langle \boldsymbol{x},\boldsymbol{\eta}_{l}^{(q)}\right\rangle _{2}\right)\,\text{d}\mu(x)=4\pi\frac{(1-s)^{2}}{1+s}
\]
and $Q_{\mathbb{S}^{2}}(f^{(q,s)})$ is the numerical integration
of $f^{(q,s)}$ using ARPIST or LSQST. As in \citep{beckmann2014local},
we generated a triangular mesh with 60 triangles using STRIPACK \citep{renka1997algorithm}
and then split one of the triangles into 256 small triangles as shown
in Figure~\ref{fig:average-quadrature-errors}(a). We applied the
adaptive ARPIST as described in Section~\ref{subsec:Area-computation-of}
on this test mesh, resulting in 786,432 quadrature points.

\begin{figure}
\centering{}\subfloat[Test mesh as used in \citep{beckmann2014local}.]{\includegraphics[width=0.42\textwidth]{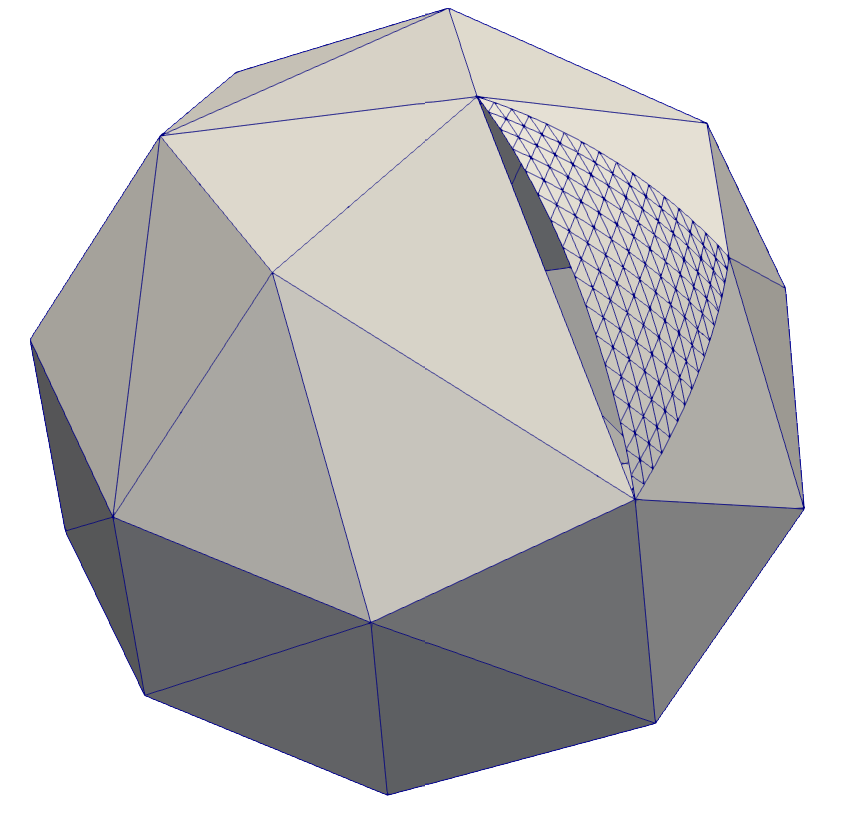}}
$\quad$\subfloat[Integration errors for different $s$.]{\includegraphics[width=0.55\textwidth]{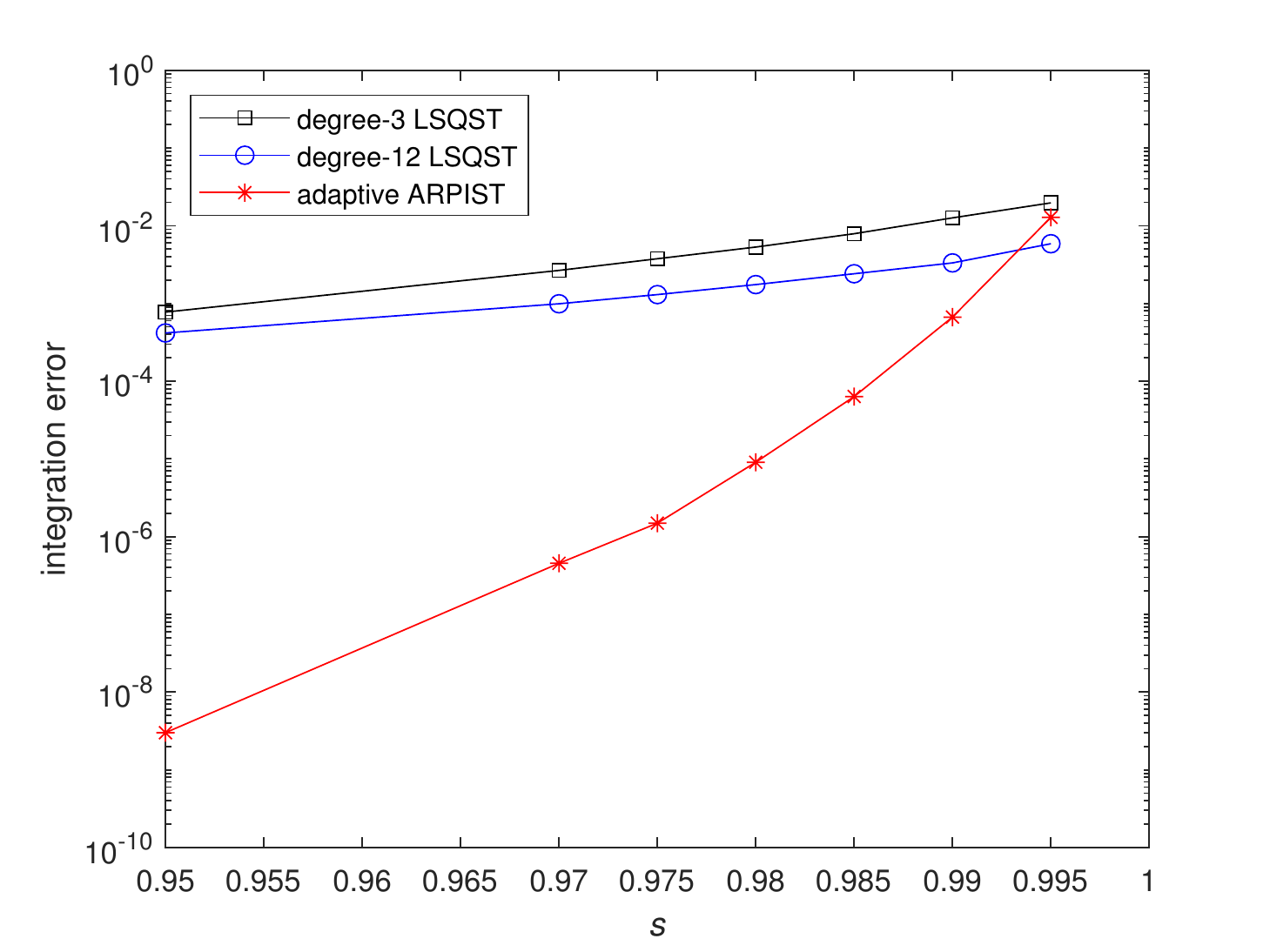}}\caption{\label{fig:average-quadrature-errors}Average quadrature errors $E(s)$
by integrating $f^{(q,s)}$ in (\ref{eq:test-function}) for different
$s$ on the test mesh.}
\end{figure}

As shown in Figure~\ref{fig:average-quadrature-errors}(b), the error
$E(s)$ for ARPIST ranged between $3\times10^{-9}$ and $1.28\times10^{-2}$
as $s$ increased. As points of reference, Figure~\ref{fig:average-quadrature-errors}(b)
reproduced the two representative results of degree-3 and degree-12
LSQST from Figure~2 of \citep{beckmann2014local} with 1,944,000
quadrature points. It can be seen that the errors from ARPIST are
about an order of magnitude smaller than LSQST for $s=0.99$. The
performance gap increased drastically as $s$ increased, and ARPIST
outperformed LSQST by five orders of magnitude for $s=0.95$. Remarkably,
ARPIST achieved this drastic improvement of accuracy with a much simpler
algorithm. The runtimes of LSQST were not reported in \citep{beckmann2014local}.
We estimate that ARPIST is at least an order of magnitude faster because
the computational costs of ARPIST and LSQST are linear and superlinear
in the number of quadrature points within each triangle, respectively.

\subsubsection{Comparison with SQRBF}

In this test, we compare ARPIST with the RBF-based spherical quadrature,
or SQRBF \citep{reeger2016numerical}. Since SQRBF has an open-source
MATLAB implementation \citep{reeger2015sqrbf}, we could conduct a
more in-depth comparison on a range of meshes. In particular, we used
STRIPACK \citep{renka1997algorithm} to generate a series of six Delaunay
triangulations of the unit sphere with $N=4^{2+i}$ nodes for $i=1,2,\dots,6$.
Figure~\ref{fig:test-mesh-SQRBF} shows three representative meshes.
\begin{figure}
\begin{raggedright}
\subfloat[$N=64$]{\raggedright{}\includegraphics[width=0.33\columnwidth]{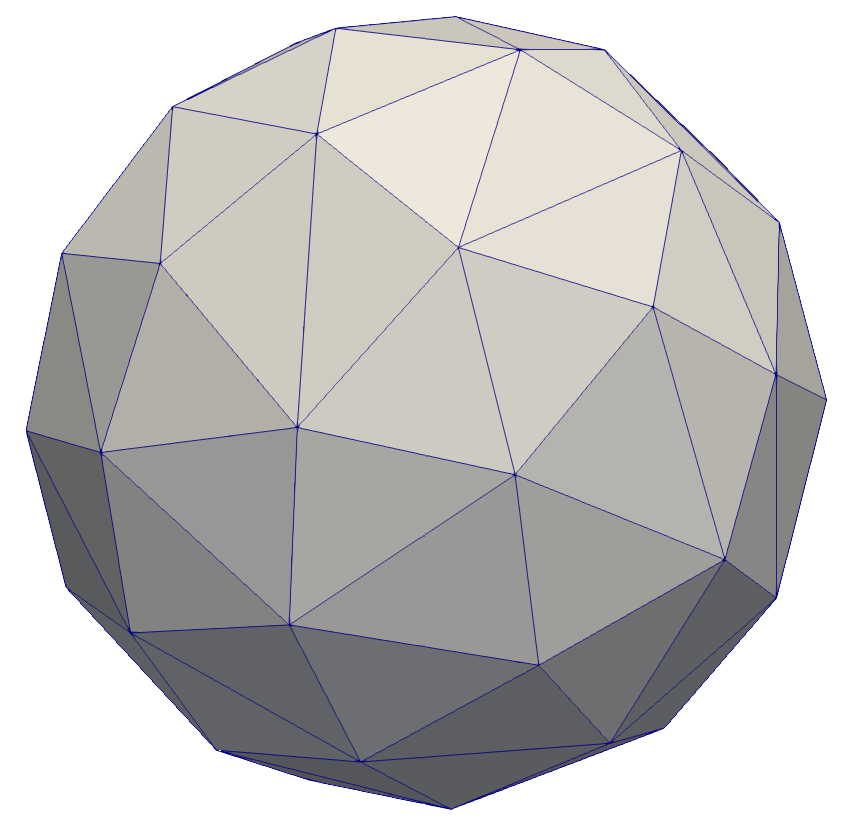}}\subfloat[$N=1024$]{\raggedright{}\includegraphics[width=0.3333\columnwidth]{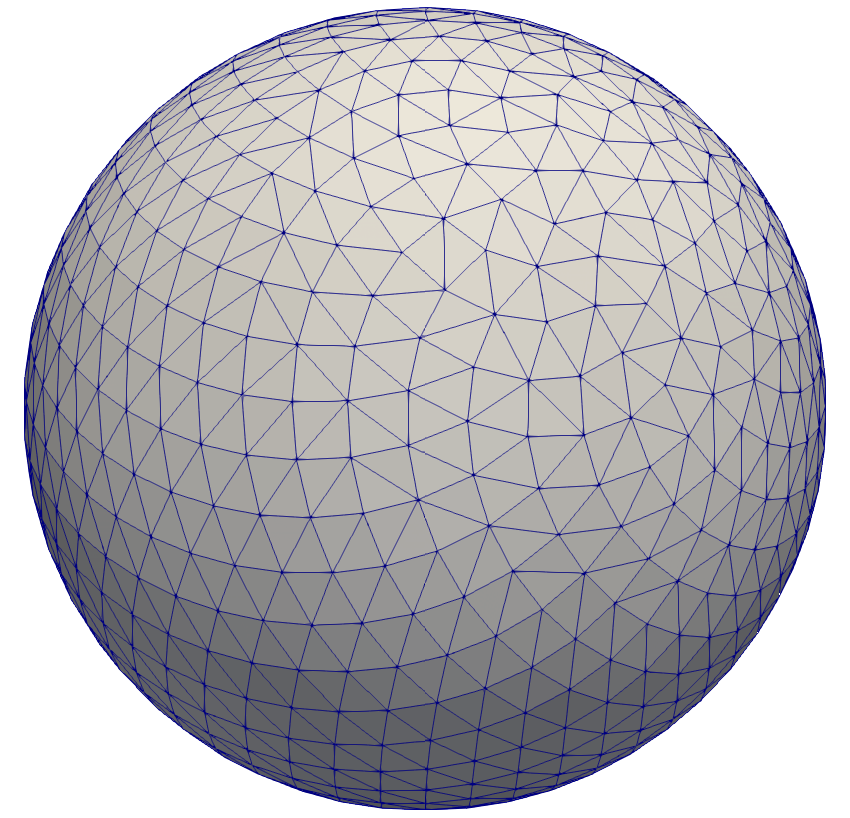}}\subfloat[$N=16384$]{\raggedright{}\includegraphics[width=0.3333\columnwidth]{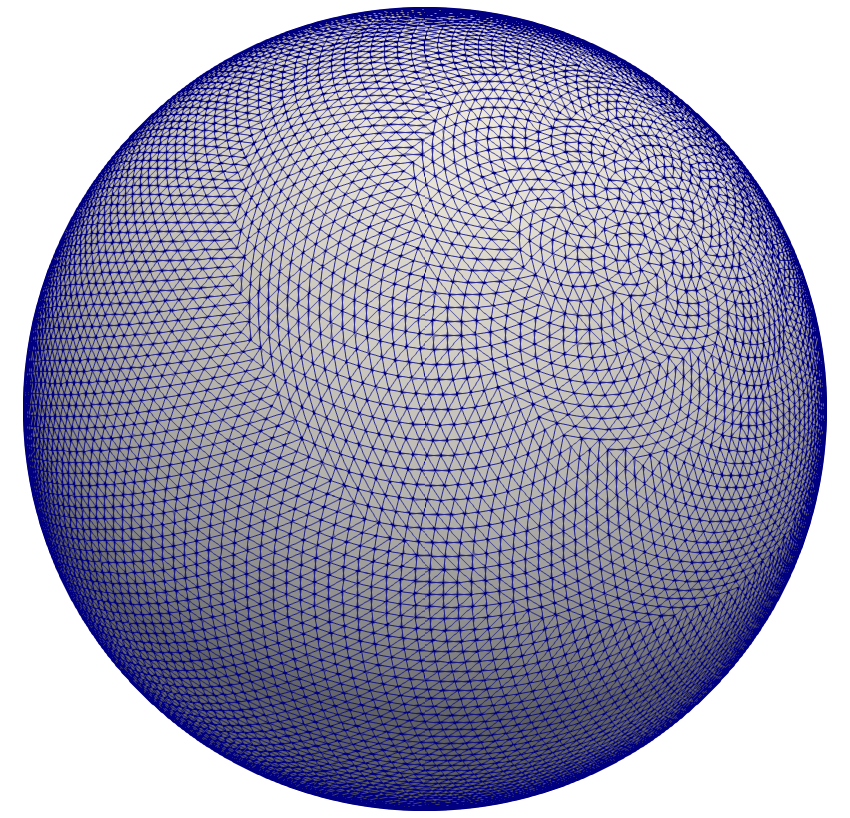}}
\par\end{raggedright}
\caption{\label{fig:test-mesh-SQRBF} Three representative test meshes in the
comparison between ARPIST and SQRBF.}
\end{figure}
 For ARPIST, we focused on degree-4 and degree-8 Gaussian quadrature
rules on each triangle, which have 6 and 16 quadrature points per
triangle, respectively. Hence, the total numbers of quadrature points
on the whole sphere are $12(4^{i}-2)$ and $32(4^{i}-2)$ for the
$i$th mesh.  For each technique, the integration of any smooth function
over the whole sphere is then a weighted sum of the function values
at the quadrature points. We compared the three techniques for several
test functions in \citep{reeger2016numerical}, and the results were
qualitatively the same. Hence, we present only the result for one
of the test functions,
\begin{align}
f_{1}(x,y,z) & =\frac{1}{9}(1+\tanh(9(z-x-y))),\label{eq:f1_test-function}
\end{align}
of which the exact integral over the sphere is $I_{\mathbb{S}^{2}}(f_{1})=4\pi/9$.

In Figure~\ref{fig:Comparison-ARPIST-SQRBF}, we compare ARPIST with
SQRBF in terms of accuracy and efficiency. Note that the different
methods have different numbers of quadrature points. Figures~\ref{fig:Comparison-ARPIST-SQRBF}(a)
and (b) show the relative integration errors with respect to the numbers
of quadrature points and the numbers of elements, respectively. It
can be seen that degree-8 ARPIST delivered better accuracy than SQRBF
while SQRBF was more accurate than degree-4 ARPIST. For completeness,
we also report the results for adaptive ARPIST, which achieved near
machine precision for all the meshes because its adaptive procedure
generated roughly the same numbers of quadrature points for coarser
meshes. In Figure~\ref{fig:Comparison-ARPIST-SQRBF}(c), we compare
the computational costs of the MATLAB implementations of ARPIST and
SQRBF. Both degree-4 and degree-8 ARPIST were about three orders of
magnitude faster than SQRBF. The adaptive ARPIST was also more efficient
than SQRBF, but it was less efficient than fixed-degree ARPIST for
coarser meshes. The cost of adaptive ARPIST can be further reduced
by enlarging its thresholds $h_{1}$ and $h_{2}$ for splitting the
triangles to reduce the number of quadrature points if lower-precision
solutions are needed. Hence, we conclude that ARPIST is much more
accurate, efficient, and robust than SQRBF. It is worth noting that
ARPIST is also much easier to implement, for example, in C++, to achieve
even greater performance. More importantly, ARPIST is more flexible
than SQRBF because it can be applied to individual triangles while
SQRBF only applies to a whole spherical triangulation.

\begin{figure}
\subfloat[~]{\includegraphics[width=0.33\textwidth]{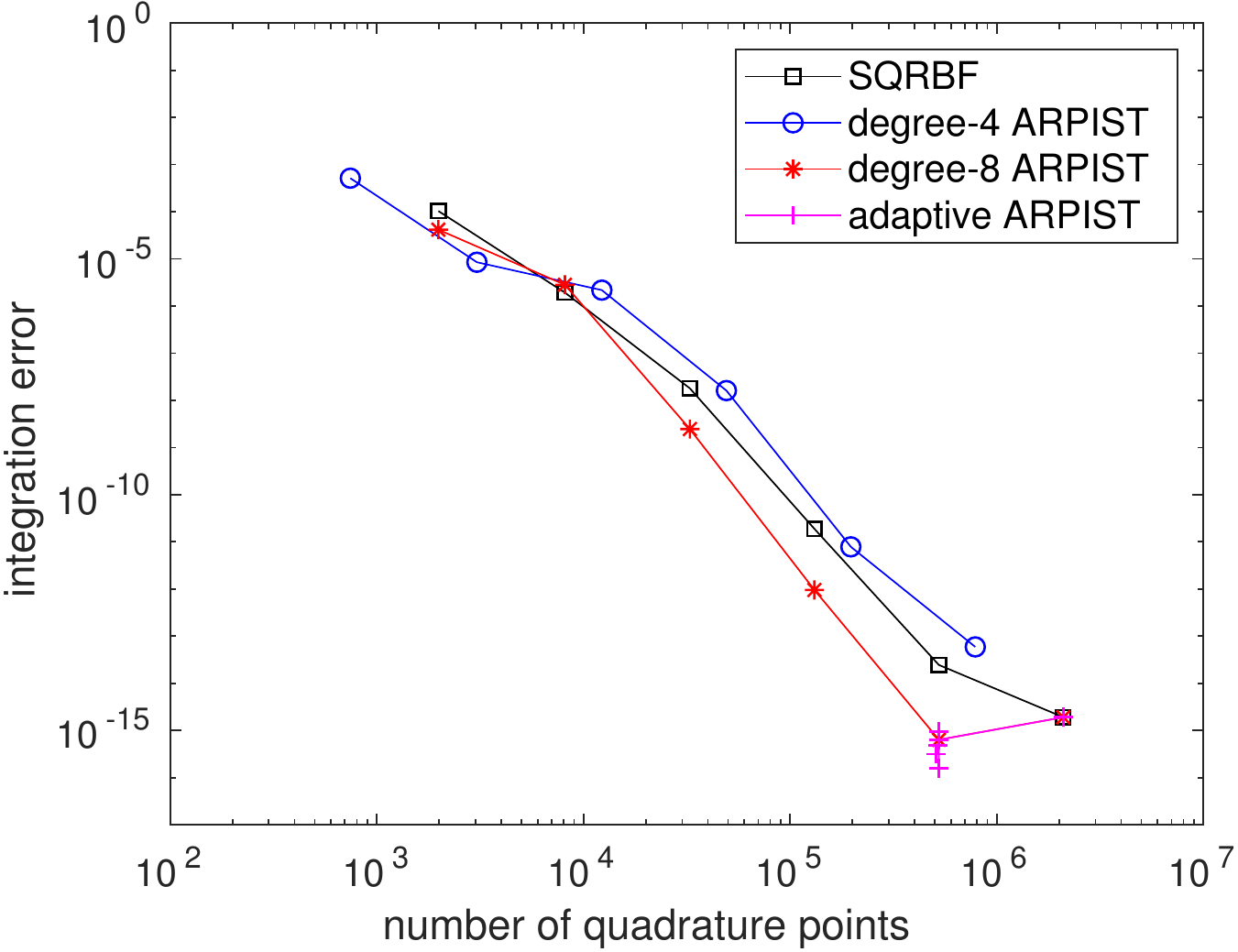}}~\subfloat[~]{\includegraphics[width=0.33\textwidth]{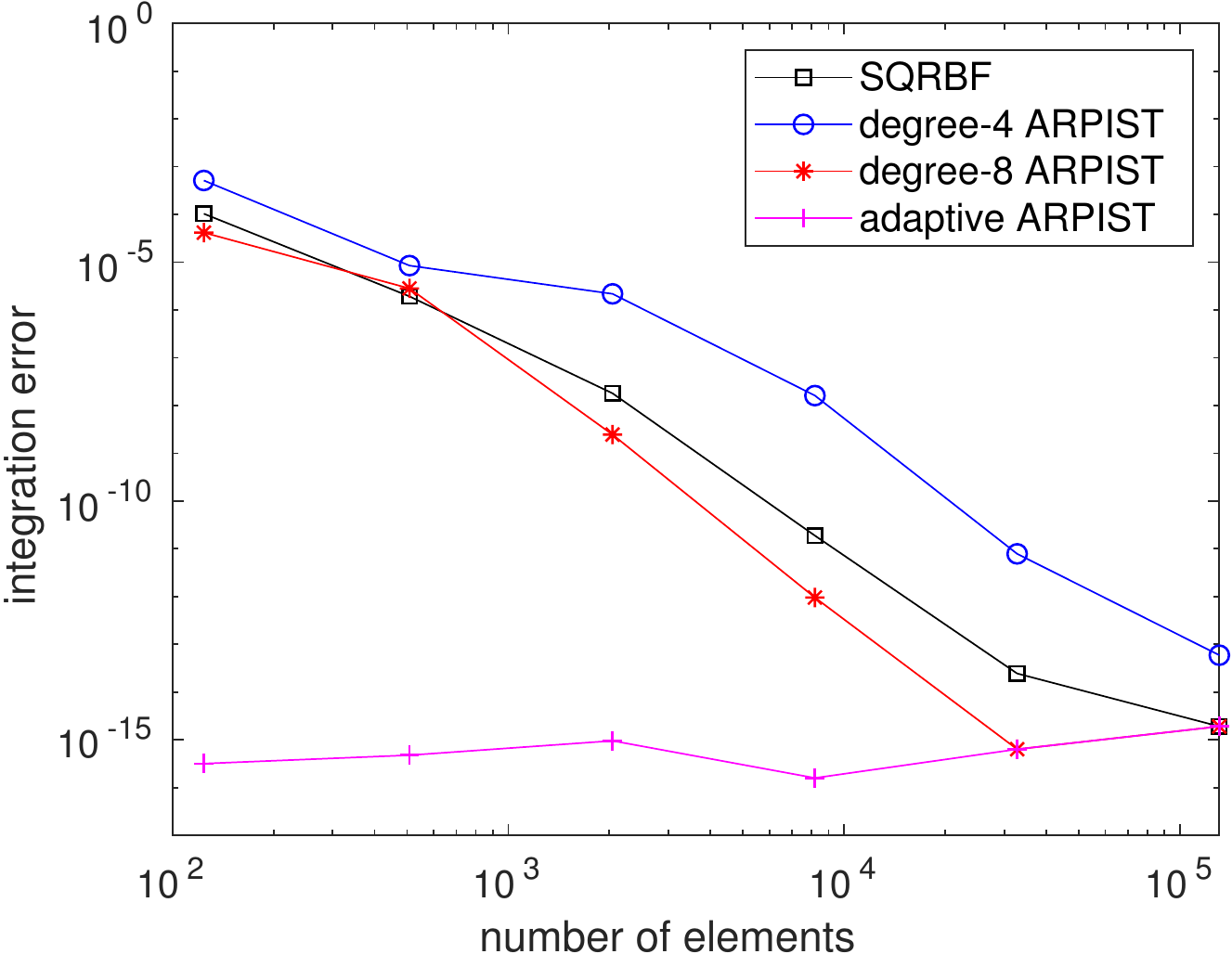}}~\subfloat[~]{\includegraphics[width=0.33\textwidth]{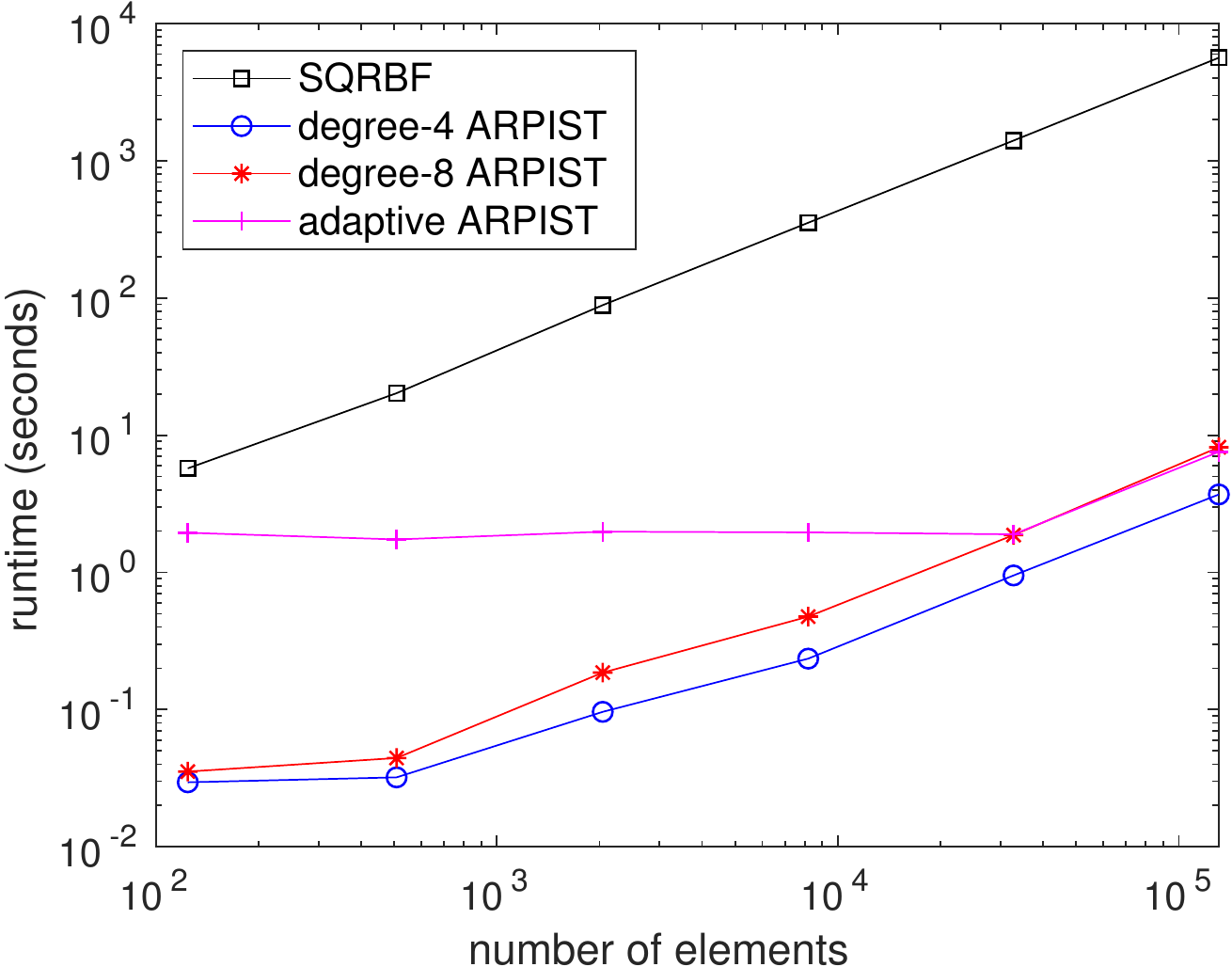}}

\caption{\label{fig:Comparison-ARPIST-SQRBF}Comparison of errors and runtimes
between ARPIST and SQRBF.}
\end{figure}

\subsection{Comparison for scattered data}

Our preceding examples consider analytical functions. In practice,
an analytic function may not be available and the function values
may be sampled at some given scattered data points, such as the nodes
of a given triangulation. The latter is the main assumption in LSQST
and SQRBF. Mathematically, it simply means that we must reconstruct
the values at the quadrature points from the scattered data values
using an interpolation or quasi-interpolation with comparable accuracy
to the quadrature rules. In the context of ARPIST, it can be achieved
by using a weighted-least-squares (WLS) reconstruction, similar to
that in \citep{RayWanJia12}. We omit the details of WLS and refer
readers to our previous works in \citep{RayWanJia12}, \citep{li2019compact},
or \citep{li2020wls} for details. For completeness, we briefly describe
how to couple WLS with ARPIST.

Suppose the function $f$ is sampled at discrete points $\{\boldsymbol{x}_{k}\}_{k=1}^{n}$,
and let $f_{k}=f(\boldsymbol{x}_{k})$. To compute the integration
over a given triangle $e_{i}$ on a sphere, we first use ARPIST to
generate the quadrature points $\{\boldsymbol{x}_{ij}\mid1\leq j\leq\ell_{i}\}$
and corresponding weights $\{w_{ij}\}$ in the triangle. Then, we
use WLS reconstruction to compute a sparse operator $\boldsymbol{A}_{i}\in\mathbb{R}^{\ell_{i}\times n}$
to interpolate the function values from the scattered points $\{\boldsymbol{x}_{k}\}_{k=1}^{n}$
to the quadrature points $\{\boldsymbol{x}_{ij}\}$. The spherical
integration operator over the triangle is 
\[
\boldsymbol{b}_{i}^{T}=\boldsymbol{w}_{i}^{T}\boldsymbol{A}_{i},
\]
where $\boldsymbol{w}_{i}\in\mathbb{R}^{\ell_{i}}$ is a column vector
composed of $w_{ij}$ and $\boldsymbol{b}_{i}\in\mathbb{R}^{n}$.
Given a column vector $\boldsymbol{f}=(f_{1},f_{2},...,f_{n})^{T}$,
the integral over $e_{i}$ is simply $\boldsymbol{b}_{i}^{T}\boldsymbol{f}$.
To obtain an integration operator over the complete triangulation,
one simply needs to add up $\boldsymbol{b}_{i}$ for all the triangles
$\{e_{i}\mid1\leq i\leq m$\}, i.e., $\boldsymbol{b}=\sum\boldsymbol{b}_{i}$.
Then, $\boldsymbol{b}^{T}\boldsymbol{f}$ is the total integral over
the whole sphere.

To assess the accuracy of ARPIST+WLS,\footnote{We are unable to compare with LSQST for this test due to the unavailability
of its source code.} we compare it with SQRBF for two test functions in \citep{reeger2016numerical},
namely $f_{1}$ in (\ref{eq:f1_test-function}) and 
\[
f_{2}(x,y,z)=\frac{1}{2}+\frac{\arctan(300(z-0.9999))}{\pi}.
\]
The exact integral of $f_{2}$ over the sphere is $I_{\mathbb{S}^{2}}(f_{2})\thickapprox0.014830900415995262852$
\citep{fuselier2014kernel}. We used STRIPACK \citep{renka1997algorithm}
to generate a series of Delaunay triangulations of the unit sphere
with $N=4^{4+i}$ nodes for $i=1,2,\dots,5$ and then sampled the
functions at the nodes of the triangulations. Since SQRBF can only
integrate over the whole sphere, we computed the operator $\boldsymbol{b}$
in ARPIST+WLS instead of $\boldsymbol{b}_{i}$ for the individual
triangles. Since ARPIST uses degree-4 and degree-8 quadrature rules,
we used degree-4, degree-6, and degree-8 WLS to match the accuracy
of the quadrature rules. As can be seen in Figure~\ref{fig:Comparison-ARPIST-SQRBF-1},
ARPIST with degree-8 WLS is comparable with SQRBF for $f_{1}$, and
it outperformed SQRBF on most of the meshes for $f_{2}$. However,
ARPIST with degree-4 and degree-6 WLS under-performed both SQRBF and
ARPIST+WLS-8 for finer meshes, because their interpolation errors
dominated the integration errors. It is worth noting that SQRBF uses
higher-degree polynomials than WLS-8, and it is designed for integrating
over the whole sphere only. 

\begin{figure}
\subfloat[\label{fig:aprist+wls f1}~]{\includegraphics[width=0.48\textwidth]{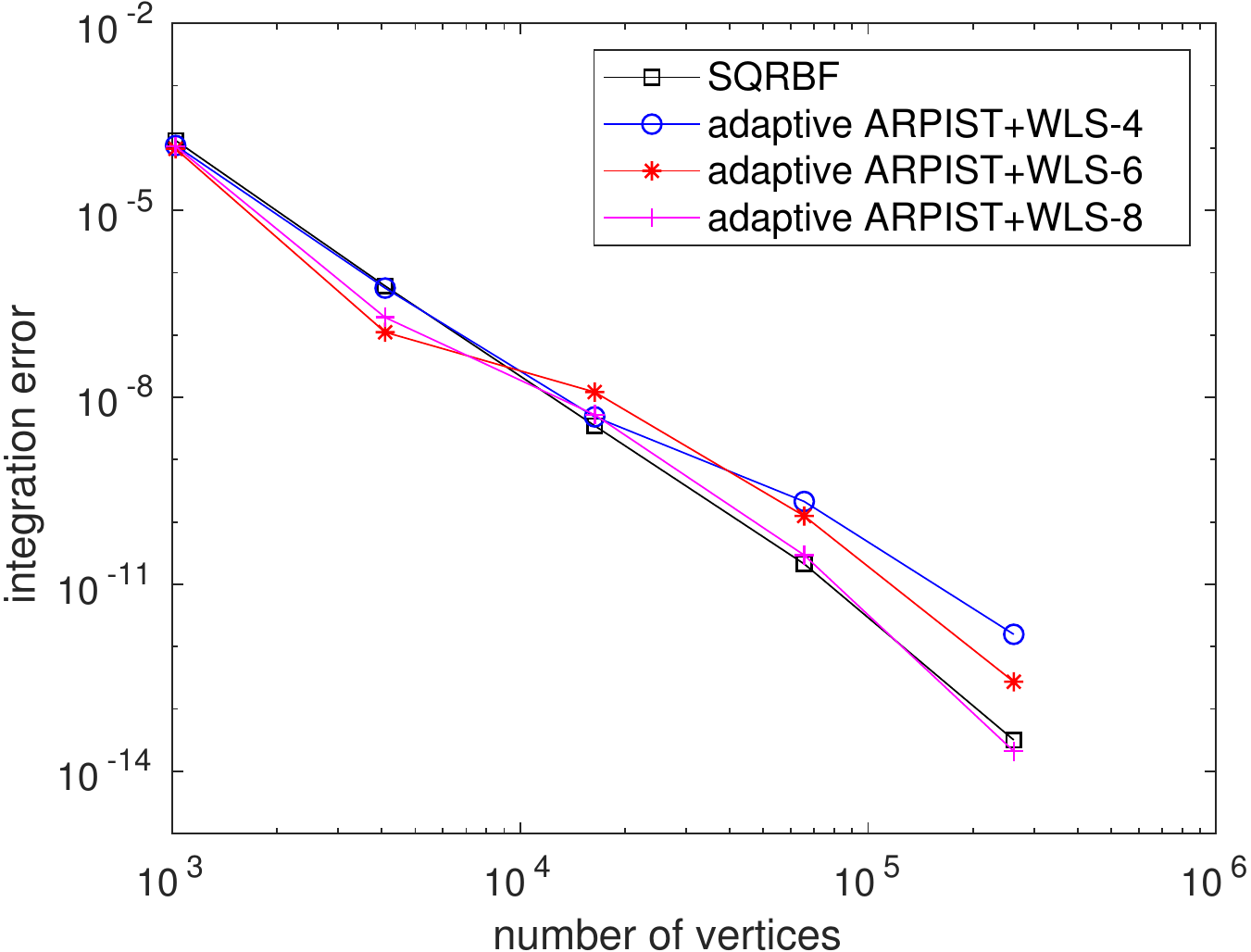}}~\subfloat[\label{fig:apritst+wls f2}~]{\includegraphics[width=0.48\textwidth]{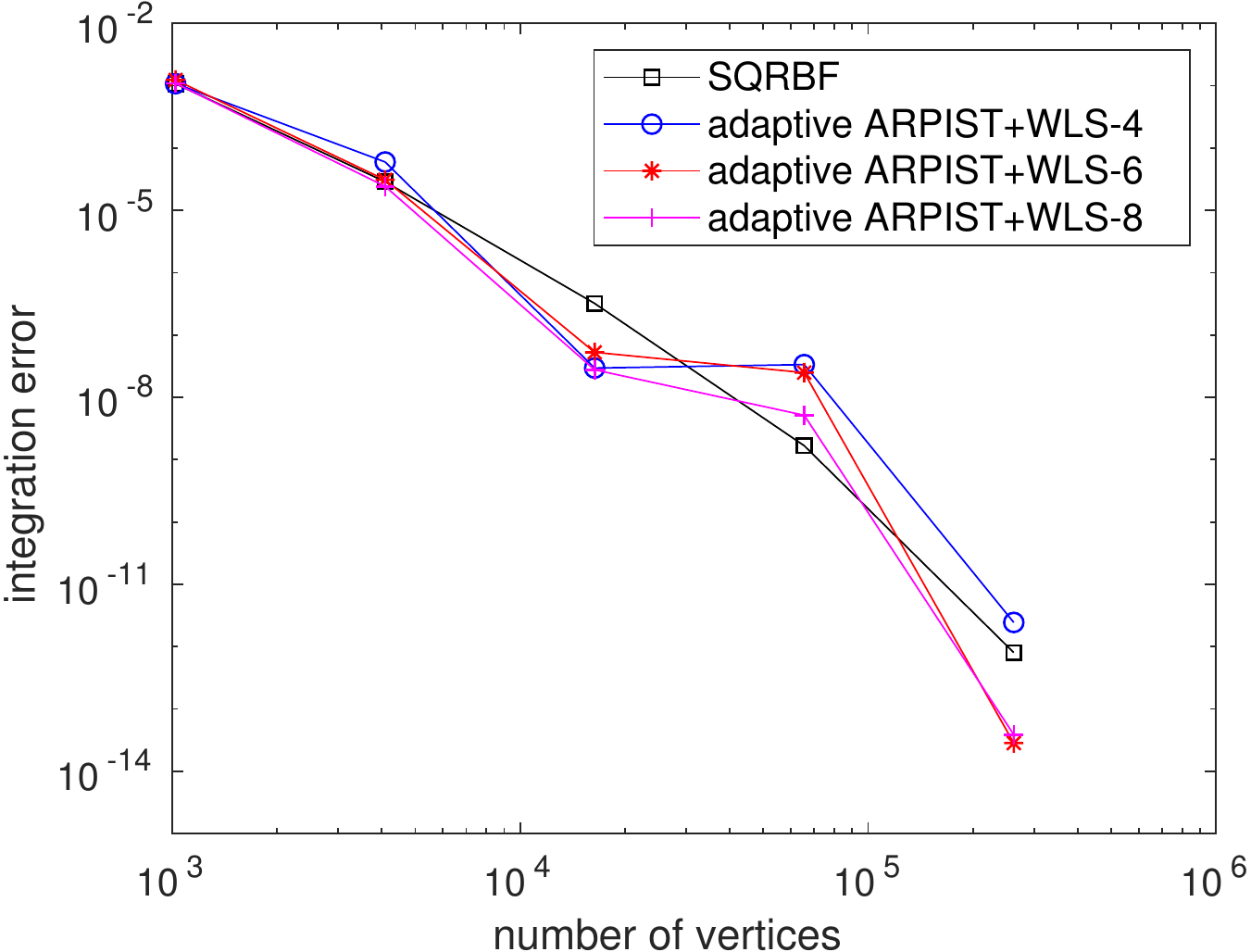}}

\caption{\label{fig:Comparison-ARPIST-SQRBF-1}Comparison of overall integration
errors between ARPIST+WLS and SQRBF.}
\end{figure}

\section{\label{sec:Conclusions}Conclusions}

In this work, we propose a new integration technique for spherical
triangles, called ARPIST. ARPIST utilizes a simple and effective transformation
from the spherical triangle to the linear triangle via radial projection
to achieve high accuracy and efficiency. More importantly, ARPIST
overcomes the potential instabilities in the Jacobian determinant
of the transformation to achieve provable accuracy and stability even
for poorly shaped triangles. Our experimental results verified that
ARPIST could reliably achieve (near) machine precision. We also showed
that ARPIST is orders of magnitude more accurate than the popular
technique of computing the area of spherical triangles based on L'Huilier's
Theorem. ARPIST is also more accurate and significantly more efficient
than other recently proposed techniques for integrating smooth functions
on spheres. When coupled with degree-8 WLS reconstructions, ARPIST
can integrate scattered data values with similar or better accuracy
compared to SQRBF. One limitation of this work is that it considered
only smooth functions. In addition, if the function has discontinuities,
then using a high-degree quadrature rule would generally lead to instabilities
due to the violation of the regularity assumptions of high-degree
quadrature rules. Some high-order limiters (such as WLS-ENO \citep{li2020wls})
are needed in this setting. We plan to address this issue in the future.

\section*{Acknowledgments}

This work was supported under the Scientific Discovery through Advanced
Computing (SciDAC) program in the US Department of Energy\textquoteright s
Office of Science, Office of Advanced Scientific Computing Research
through subcontract \#462974 with Los Alamos National Laboratory.
We thank Drs. Vijay S. Mahadevan and Paul Ullrich for helpful discussions
on spherical integration, which have motivated this work, and thank
Dr. Qiao Chen for his help in proofreading the paper. We thank the
anonymous reviewers for their helpful comments.

\bibliographystyle{elsarticle-num}
\bibliography{solution_transfer}

\appendix

\section{\label{sec:Error-analysis-of-ALUPP}Error analysis of LUPP with equilibration}

We prove the error bounds of the determinant using LUPP in Theorem~\ref{thm:forward-error-LUPP}
by adapting the standard backward error analysis in linear algebra.
\begin{proof}
 Without loss of generality, assume $\boldsymbol{a}_{2}$ and $\boldsymbol{a}_{3}$
are in counterclockwise order w.r.t. $\boldsymbol{a}_{1}$, so that
$\det(\boldsymbol{A})>0$. First, consider the equilibrated matrix
$\boldsymbol{B}=\boldsymbol{A}\boldsymbol{D}^{-1}$. Let $\tilde{\boldsymbol{P}}\boldsymbol{B}=\tilde{\boldsymbol{L}}\tilde{\boldsymbol{U}}$
be the LUPP with floating-point arithmetic. We claim that the computed
determinant is backward stable in the sense that there exists $\tilde{\boldsymbol{B}}=[\tilde{\boldsymbol{b}}_{1},\tilde{\boldsymbol{b}}_{2},\tilde{\boldsymbol{b}}_{3}]$
with $\Vert\tilde{\boldsymbol{b}}_{i}-\boldsymbol{b}_{i}\Vert=\mathcal{O}(\epsilon_{\text{machine}})\Vert\boldsymbol{b}_{i}\Vert=\mathcal{O}(\epsilon_{\text{machine}})$
for $i=1,2,3$ such that 
\[
\left|\det(\tilde{\boldsymbol{B}})\right|=\left|\prod_{i=1}^{3}\tilde{u}_{ii}\right|.
\]
This backward stability follows from the classical backward error
analysis of LUPP \citep[Theorem 22.2]{trefethen1997numerical}: There
exists $\tilde{\boldsymbol{B}}=[\tilde{\boldsymbol{b}}_{1},\tilde{\boldsymbol{b}}_{2},\tilde{\boldsymbol{b}}_{3}]$
with $\Vert\tilde{\boldsymbol{B}}-\boldsymbol{B}\Vert_{\infty}=\Vert\boldsymbol{B}\Vert_{\infty}\mathcal{O}(\epsilon_{\text{machine}})$
for $i=1,2,3$, such that $\tilde{\boldsymbol{P}}\tilde{\boldsymbol{B}}=\tilde{\boldsymbol{L}}\tilde{\boldsymbol{U}}$,
where $\tilde{\boldsymbol{L}}$ is unit lower triangular (i.e., with
ones along its diagonal), and $\tilde{\boldsymbol{P}}$ is another
permutation matrix. Hence, $\left|\det(\tilde{\boldsymbol{B}})\right|=\left|\det(\tilde{\boldsymbol{U}})\right|=\left|\prod_{i=1}^{3}\tilde{u}_{ii}\right|$.
Furthermore, under the assumption of $\Vert\boldsymbol{b}_{i}\Vert=1$,
$1\leq\Vert\boldsymbol{B}\Vert\leq\sqrt{3}$, so $\Vert\tilde{\boldsymbol{b}}_{i}-\boldsymbol{b}_{i}\Vert\leq\Vert\tilde{\boldsymbol{B}}-\boldsymbol{B}\Vert=\mathcal{O}(\epsilon_{\text{machine}})$.

Second, the absolute condition number of $\text{\ensuremath{\det}}(\boldsymbol{B})$
w.r.t. perturbations in $b_{ij}$ is $\left|\frac{\partial\text{\ensuremath{\det}}(\boldsymbol{B})}{\partial b_{ij}}\right|=\left|\det(\boldsymbol{B}_{ij})\right|$,
where $\boldsymbol{B}_{ij}$ denotes the $2$-by-$2$ matrix obtained
by removing the $i$th row and $j$th column in $\boldsymbol{B}$.
The absolute condition number of $\text{\ensuremath{\det}}(\boldsymbol{B})$
in $\infty$-norm is $\kappa_{\text{det}}=\left\Vert \left[\frac{\partial\text{\ensuremath{\det}}(\boldsymbol{B})}{\partial b_{ij}}\right]_{ij}\right\Vert _{\infty}$,
and 
\begin{equation}
\left|\det(\tilde{\boldsymbol{B}})-\det(\boldsymbol{B})\right|\leq\kappa_{\text{det}}\Vert\tilde{\boldsymbol{B}}-\boldsymbol{B}\Vert_{\infty}=\kappa_{\text{det}}\Vert\boldsymbol{B}\Vert_{\infty}\mathcal{O}(\epsilon_{\text{machine}}).\label{eq:error-determinant}
\end{equation}
Given that $\boldsymbol{B}$ is equilibrated, $\kappa_{\text{det}}\leq6$
and $\Vert\boldsymbol{B}\Vert_{\infty}\leq3$, so $\left|\left|\det(\tilde{\boldsymbol{B}})\right|-\left|\det(\boldsymbol{B})\right|\right|\leq\left|\det(\tilde{\boldsymbol{B}})-\det(\boldsymbol{B})\right|=\mathcal{O}(\text{\ensuremath{\epsilon}}_{\text{machine}})$. 

Finally, let $\tilde{d}_{i}$ be $\Vert\boldsymbol{a}_{i}\Vert$ computed
in floating-point arithmetic.
\[
\det(\tilde{\boldsymbol{D}})=\prod_{i}\tilde{d}_{i}=(1+\mathcal{O}(\epsilon_{\text{machine}}))\text{\ensuremath{\det}(\ensuremath{\boldsymbol{D}})}.
\]
Therefore,
\begin{align*}
 & \left|\left|\prod_{i=1}^{3}d_{i}\tilde{u}_{ii}\right|-\left|\prod_{i=1}^{3}d_{i}u_{ii}\right|\right|\\
\leq & \left|\prod_{i=1}^{3}d_{i}\tilde{u}_{ii}-\prod_{i=1}^{3}d_{i}u_{ii}\right|\\
= & \left|\det(\tilde{\boldsymbol{D}})\det(\tilde{\boldsymbol{B}})-\det(\boldsymbol{D})\det(\boldsymbol{B})\right|\\
= & \left|\det(\tilde{\boldsymbol{D}})\det(\tilde{\boldsymbol{B}})-\det(\boldsymbol{D})\det(\tilde{\boldsymbol{B}})+\det(\boldsymbol{D})\det(\tilde{\boldsymbol{B}})-\det(\boldsymbol{D})\det(\boldsymbol{B})\right|\\
\leq & \det(\tilde{\boldsymbol{B}})\left|\det(\tilde{\boldsymbol{D}})-\det(\boldsymbol{D})\right|+\det(\boldsymbol{D})\left|\det(\tilde{\boldsymbol{B}})-\det(\boldsymbol{B})\right|\\
= & \det(\boldsymbol{D})\mathcal{O}(\epsilon_{\text{machine}}).
\end{align*}
\end{proof}
In the proof, column equilibration played an important role. Without
equilibration, it would be more difficult to bound $\kappa_{\det}\Vert\boldsymbol{B}\Vert_{\infty}$
in (\ref{eq:error-determinant}). Nevertheless, LUPP without equilibration
turned out to perform well in practice.

\section{\label{sec:Anchored-triple-product}Error analysis of anchored triple
product}

We now generalize the error analysis in \ref{sec:Error-analysis-of-ALUPP}
to prove Theorem~\ref{thm:forward-error-TP} for ATP. The algorithm
for ATP is simpler, but its analysis needs to deviate from the standard
backward error analysis. To this end, we define $\sigma\coloneqq\left|\boldsymbol{a}\cdot\left(\boldsymbol{b}\times\boldsymbol{c}\right)\right|/\Vert\boldsymbol{a}\Vert\Vert\boldsymbol{b}\Vert\Vert\boldsymbol{c}\Vert$,
so that $1/\sigma$ will play the role of the \emph{relative condition
number} under the assumption that $\sigma\gg\epsilon_{\text{machine}}$.
This simplification is necessary because the triple product is a nonlinear
(quadratic) operation, so unlike in linear algebra, one cannot give
a simple closed form for the condition number when $\sigma\approx\epsilon_{\text{machine}}$.
\begin{proof}
We first show that the relative error in the triple product $\boldsymbol{a}\cdot(\boldsymbol{b}\times\boldsymbol{c})$
is approximately bounded by $\mathcal{O}(\epsilon_{\text{machine}})/\sigma$
when $\sigma\gg\epsilon_{\text{machine}}$. Without loss of generality,
assume that $a_{i}$, $b_{i}$, and $c_{i}$ are all floating-point
numbers. Then,
\begin{align*}
\boldsymbol{b}\otimes\boldsymbol{c} & =\begin{bmatrix}\left(b_{2}c_{3}(1+\epsilon_{1})-b_{3}c_{2}(1+\epsilon_{2})\right)(1+\epsilon_{7})\\
\left(b_{3}c_{1}(1+\epsilon_{3})-b_{1}c_{3}(1+\epsilon_{4})\right)(1+\epsilon_{8})\\
\left(b_{1}c_{2}(1+\epsilon_{5})-b_{2}c_{1}(1+\epsilon_{6})\right)(1+\epsilon_{9})
\end{bmatrix}\\
 & =\begin{bmatrix}b_{2}c_{3}(1+\epsilon_{2}')-b_{3}c_{2}(1+\epsilon_{6}')\\
b_{3}c_{1}(1+\epsilon_{3}')-b_{1}c_{3}(1+\epsilon_{4}')\\
b_{1}c_{2}(1+\epsilon_{1}')-b_{2}c_{1}(1+\epsilon_{5}')
\end{bmatrix},
\end{align*}
where $\left|\epsilon_{i}\right|\leq\epsilon_{\text{machine}}$ and
$\left|\epsilon_{i}'\right|\leq2\epsilon_{\text{machine}}+\mathcal{O}(\epsilon_{\text{machine}}^{2})$.
Let $\tilde{\boldsymbol{v}}$ denote $\boldsymbol{b}\otimes\boldsymbol{c}$.
Then, 
\begin{align*}
\boldsymbol{a}\odot(\boldsymbol{b}\otimes\boldsymbol{c}) & =\left(\left(a_{1}(1+\epsilon_{10})\tilde{v}_{1}+a_{2}(1+\epsilon_{11})\tilde{v}_{2}\right)(1+\epsilon_{12})+a_{3}(1+\epsilon_{13})\tilde{v}_{3}\right)(1+\epsilon_{14})\\
 & =\left(a_{1}(1+\epsilon_{a1})\right)\tilde{v}_{1}+\left(a_{2}(1+\epsilon_{a2})\right)\tilde{v}_{2}+\left(a_{3}(1+\epsilon_{a3})\right)\tilde{v}_{3}
\end{align*}
where $\left|\epsilon_{ai}\right|\leq3\epsilon_{\text{machine}}+\mathcal{O}(\epsilon_{\text{machine}}^{2})$
for $1\leq i\leq2$ and $\left|\epsilon_{a3}\right|\leq2\epsilon_{\text{machine}}+\mathcal{O}(\epsilon_{\text{machine}}^{2})$.
Let $\tilde{\boldsymbol{a}}=\left[a_{i}(1+\epsilon_{ai})\right]_{i}$
and $\tilde{\boldsymbol{b}}=\left[b_{i}(1+\epsilon_{i}')\right]_{i}$
for $1\leq i\leq3$, and then 
\begin{align}
\boldsymbol{a}\odot(\boldsymbol{b}\otimes\boldsymbol{c})-\tilde{\boldsymbol{a}}\cdot(\tilde{\boldsymbol{b}}\times\boldsymbol{c}) & =\tilde{\boldsymbol{a}}\cdot(\tilde{\boldsymbol{b}}\times\boldsymbol{c})-\tilde{\boldsymbol{a}}\cdot\begin{bmatrix}\tilde{b}_{2}c_{3}-\tilde{b}_{3}c_{2}\frac{1+\epsilon_{6}'}{1+\epsilon_{3}'}\\
\tilde{b}_{3}c_{1}-\tilde{b}_{1}c_{3}\frac{1+\epsilon_{4}'}{1+\epsilon_{1}'}\\
\tilde{b}_{1}c_{2}-\tilde{b}_{2}c_{1}\frac{1+\epsilon_{5}'}{1+\epsilon_{2}'}
\end{bmatrix}\nonumber \\
 & =\tilde{a}_{1}\tilde{b}_{3}c_{2}\epsilon_{1}''+\tilde{a}_{2}\tilde{b}_{1}c_{3}\epsilon_{2}''+\tilde{a}_{3}\tilde{b}_{2}c_{1}\epsilon_{3}'',\label{eq:remainder}
\end{align}
where $\left|\epsilon_{i}''\right|\leq4\epsilon_{\text{machine}}+\mathcal{O}(\epsilon_{\text{machine}}^{2})$.
Assuming $\sigma\gg\epsilon_{\text{machine}}$, it is easy to show
that 
\[
\left\Vert \tilde{\boldsymbol{a}}\cdot\left(\tilde{\boldsymbol{b}}\times\boldsymbol{c}\right)\right\Vert \geq\tilde{\sigma}\Vert\tilde{\boldsymbol{a}}\Vert\Vert\tilde{\boldsymbol{b}}\Vert\Vert\boldsymbol{c}\Vert,
\]
where $\tilde{\sigma}=\sigma(1+\mathcal{O}(\text{\ensuremath{\epsilon}}_{\text{machine}}))$,
and
\begin{align*}
\left|\tilde{a}_{1}\tilde{b}_{3}c_{2}\epsilon_{1}''+\tilde{a}_{2}\tilde{b}_{1}c_{3}\epsilon_{2}''+\tilde{a}_{3}\tilde{b}_{2}c_{1}\epsilon_{3}''\right| & \leq\Vert\tilde{\boldsymbol{a}}\Vert\Vert\tilde{\boldsymbol{b}}\Vert\Vert\boldsymbol{c}\Vert\mathcal{O}(\text{\ensuremath{\epsilon}}_{\text{machine}})\\
 & \leq\frac{1}{\tilde{\sigma}}\left\Vert \tilde{\boldsymbol{a}}\cdot\left(\tilde{\boldsymbol{b}}\times\boldsymbol{c}\right)\right\Vert \mathcal{O}(\text{\ensuremath{\epsilon}}_{\text{machine}})\\
 & =\frac{1}{\sigma}\left\Vert \tilde{\boldsymbol{a}}\cdot\left(\tilde{\boldsymbol{b}}\times\boldsymbol{c}\right)\right\Vert \mathcal{O}(\text{\ensuremath{\epsilon}}_{\text{machine}}).
\end{align*}
If the input numbers were not yet floating-point numbers, we only
need to increase the constant factors for the $\epsilon$, and the
asymptotic argument still holds.

Second, substituting $\boldsymbol{a}_{1}$, $\boldsymbol{a}_{2}$,
and $\boldsymbol{a}_{3}$ for $\boldsymbol{a}$, $\boldsymbol{b}$,
and $\boldsymbol{c}$ in the above, and following a similar argument
as for ALUPPE in \ref{sec:Error-analysis-of-ALUPP}, we obtain 
\begin{align*}
\left|\boldsymbol{a}_{1}\cdot(\boldsymbol{a}_{2}\times\boldsymbol{a}_{3})-\tilde{\boldsymbol{a}}_{1}\cdot(\tilde{\boldsymbol{a}}_{2}\times\boldsymbol{a}_{3})\right| & =\prod_{i}\Vert\boldsymbol{a}_{i}\Vert\mathcal{O}(\epsilon_{\text{machine}})\\
 & \leq\frac{1}{\sigma}\det(\boldsymbol{A})\mathcal{O}(\epsilon_{\text{machine}})
\end{align*}
under the assumptions that $\boldsymbol{a}_{2}$ and $\boldsymbol{a}_{3}$
are in counterclockwise order with respect to $\boldsymbol{a}_{1}$
and that $\left|\boldsymbol{a}_{1}\cdot(\boldsymbol{a}_{2}\times\boldsymbol{a}_{3})\right|\geq\sigma\prod_{i}\Vert\boldsymbol{a}_{i}\Vert$
for constant $\sigma\gg\epsilon_{\text{machine}}$. Similarly,
\begin{align*}
\left|\boldsymbol{a}_{1}\odot(\boldsymbol{a}_{2}\otimes\boldsymbol{a}_{3})-\tilde{\boldsymbol{a}}_{1}\cdot(\tilde{\boldsymbol{a}}_{2}\times\boldsymbol{a}_{3})\right| & \leq\frac{1}{\sigma}\det(\boldsymbol{A})\mathcal{O}(\text{\ensuremath{\epsilon}}_{\text{machine}}).
\end{align*}
Hence, 
\begin{align*}
 & \left|\boldsymbol{a}_{1}\cdot(\boldsymbol{a}_{2}\times\boldsymbol{a}_{3})-\boldsymbol{a}_{1}\odot(\boldsymbol{a}_{2}\otimes\boldsymbol{a}_{3})\right|\\
\leq & \left|\boldsymbol{a}\cdot(\boldsymbol{b}\times\boldsymbol{c})-\tilde{\boldsymbol{a}}\cdot(\tilde{\boldsymbol{b}}\times\boldsymbol{c})\right|+\left|\boldsymbol{a}\odot(\boldsymbol{b}\otimes\boldsymbol{c})-\tilde{\boldsymbol{a}}\cdot(\tilde{\boldsymbol{b}}\times\boldsymbol{c})\right|\\
\le & \frac{1}{\sigma}\det(\boldsymbol{A})\mathcal{O}(\epsilon_{\text{machine}}).
\end{align*}
\end{proof}

\end{document}